\def\R{\mathbb{R}}
\def\N{\mathbb{N}}
\def\T{\mathbb{T}}
\newcommand{\norm}[1]{\left\lVert#1\right\rVert}
\newcommand{\abs}[1]{\left\lvert#1\right\rvert}
\def \tp{\textup}
\def\m{\mathbf{m}}
\def\v{\mathbf{v}}
\def\w{\mathbf{w}}
\def\e{\mathbf{e}}
\def\n{\nu}
\def\0{\mathbf{0}}
\def\d{\,\textup{d}}
\newcommand{\defeq}{\mathrel{:\mkern-0.25mu=}}
\newcommand{\eqdef}{\mathrel{=\mkern-0.25mu:}}
\newtheorem{thm}{Theorem}[section]
\newtheorem{cor}[thm]{Corollary}
\newtheorem{prop}[thm]{Proposition}
\theoremstyle{definition}
\newtheorem{rem}[thm]{Remark}
\numberwithin{equation}{section}
\begin{document}

\title{The lamination convex hull of stationary IPM}

\author{Lauri Hitruhin}
\address{Aalto University, Department of Mathematics and Systems Analysis, P.O. Box 11100, FI-00076 Aalto, Finland}
\email{lauri.hitruhin@aalto.fi}
\thanks{L.H. was supported by ICMAT Severo Ochoa project SEV-2015-0554 grant MTM2017-85934-C3-2-P and the
	ERC grant 307179-GFTIPFD and the ERC grant 834728 Quamap and by a grant from The Emil Aaltonen Foundation. S.L. was supported by the ERC grant 307179-GFTIPFD and by the AtMath Collaboration at the University of Helsinki.}

\author{Sauli Lindberg}
\address{University of Helsinki, Department of Mathematics and Statistics, P.O. BOX 68, 00014 Helsingin yliopisto, Finland}
\email{sauli.lindberg@helsinki.fi}
\thanks{}

\date{}

\begin{abstract}
We compute the lamination convex hull of the stationary IPM equations. We also show in bounded domains that for subsolutions of stationary IPM taking values in the lamination convex hull, velocity vanishes identically and density depends only on height. We relate the results to the infinite time limit of non-stationary IPM.
\end{abstract}

\maketitle
\section{Introduction}
We consider the flow of two immiscible incompressible fluids with equal viscosities and different densities in a porous medium. This can be modelled by the incompressible porous media equations (IPM) which consist of conservation of mass, incompressibility and Darcy's law:
\begin{align}
& \partial_t \rho + \nabla \cdot (\rho \v) = 0, \label{True IPM 1} \\
& \nabla \cdot \v = 0, \label{True IPM 2} \\
& \frac{\mu}{\kappa} \v = - \nabla p - \rho \mathbf{g} \label{True IPM 3},
\end{align}
where $\rho(x,t) \in \R$ is the fluid density, $\v(x,t) \in \R^2$ is the fluid velocity and $\mathbf{g} = (0,g)$ is gravity ~\cite{CG}. In the case of a smooth (simply connected) domain $\Omega \subset \R^2$, we assume the impermeability condition $\v \cdot \n = 0$ on $\partial \Omega$. Without loss of generality, we set $\mu/\kappa = g = 1$.

C\'{o}rdoba, Faraco and Gancedo proved the non-uniqueness of spatially periodic weak solutions $\v = (v_1,v_2) \in L^\infty(0,T;L^2)$ and $\rho \in L^\infty(0,T;L^\infty)$ of IPM in ~\cite{CFG}. The proof employs the method of convex integration which was first adapted to hydrodynamics by De Lellis and Sz\'{e}kelyhidi in their ground-breaking paper ~\cite{DLS}. The construction of ~\cite{CFG}, built on degenerate T4 configurations, provides a robust method of constructing bounded weak solutions in inviscid fluid dynamics without determining the exact $\Lambda$-convex hull; for an application to more general active scalar equations with an even multiplier see ~\cite{Shvydkoy}. The existence and non-uniqueness of spatially periodic $C^{1/9-\epsilon}$ solutions of IPM for smooth initial datas was shown by Isett and Vicol in ~\cite{IV}.

In ~\cite{Sze}, Sz\'{e}kelyhidi computed the $\Lambda$-convex hull and showed it to be the exact relaxation of IPM equations. He also used it to construct infinitely many admissible weak solutions to the unstable Muskat problem in $\Omega = (-1,1)^2$ with a flat interface as initial data. Sz\'{e}kelyhidi also computed a differently normalised hull that leads to solutions with a bounded velocity. Admissible mixing solutions to the unstable Muskat problem with a non-flat $H^5$-regular interface were constructed by Castro, C\'{o}rdoba and Faraco in ~\cite{CCF}. For further developments see ~\cite{ACF,CFM,FSz,Mengual,NSz}; here, also, the construction of admissible weak solutions relies on the exact hull and the construction of an admissible subsolution.

In stationary IPM, in contrast, if $\v \in L^2(\Omega,\R^2)$ with $\v \cdot \n|_{\partial \Omega} = 0$ and $\rho \in L^\infty(\Omega)$ form a weak solution, then $\v \equiv 0$ and $\partial_1 \rho \equiv 0$; a proof of this simple fact by Elgindi appears in ~\cite{Elgindi}. As a main result, Elgindi showed on $\R^2$ and $\T^2$ that whenever solutions of non-stationary IPM have initial datas near certain stationary solutions, they must converge to the stationary solution in $H^3$ when $t \to \infty$. The global well-posedness of non-stationary IPM is open, but Elgindi showed it around said stationary solutions. In \cite{CCL}, Castro, C\'{o}rdoba and Lear proved structurally similar results for the \emph{confined IPM} case $\Omega = \T^1 \times (-1,1)$, overcoming new difficulties to do with the boundary.

Nevertheless, in ~\cite{CLV}, Constantin, La and Vicol constructed solutions of stationary IPM that are smooth and vanish outside a strip that has finite width in the direction $x_1 = k x_2$, $k \in \R$. The result is one example of their construction which uses Grad-Shafranov-like equations to obtain smooth, localised solutions in hydrodynamics, motivated by Gavrilov's construction of smooth, compactly supported solutions of stationary Euler equations in ~\cite{Gavrilov}. The solutions of stationary IPM in ~\cite{CLV} are functions of the variable $z = x_1 - k x_2$ and, as such, they are periodic (even constant) in the axial direction of the strip. If the direction of finite width of the strip is $(0,1)$, i.e., in the case $\Omega = \mathbb{T}^1 \times (-1,1)$, an easy adaptation of Elgindi's proof (see \textsection \ref{Non-existence of non-trivial solutions in bounded domains}) rules out such a construction. This dichotomy highlights the role of the direction of gravity in IPM and is discussed briefly in Remark \ref{Dichotomy remark}.

\vspace{0.2cm}
The problem we address is the determination of the \emph{relaxation} of stationary IPM. One of our aims is to shed light on the following question: how are the differences between stationary and non-stationary IPM as well as the somewhat surprising combination of the results of ~\cite{CLV} and ~\cite{Elgindi} reflected in the relaxation? We also wish to use information on the relaxation to better understand the infinite time limit of non-stationary IPM.

We set the stage by briefly describing convex integration in the Tartar framework; the relevant definitions are recalled in \textsection \ref{Relevant notions}. One first decouples a system of non-linear constant-coefficient PDE's into a system of first-order linear PDE's $\mathscr{L}(z) = \0$ and the pointwise constraint that $z(x)$ takes values in a constitutive set $K$. In the case of stationary IPM, $z = (\rho,\v,\m)$, the set of linear equations $\mathscr{L}(z) = \0$ is
\begin{align}
& \nabla \cdot \m = 0, \label{IPM 1} \\
& \nabla \cdot \v = 0, \label{IPM 2} \\
& \nabla^\perp \cdot (\v + (0,\rho)) = 0 \label{IPM 3}
\end{align}
and the constitutive set is
\begin{equation} \label{IPM 4}
K = \{(\rho,\v,\m) \in \R \times \R^2 \times \R^2 \colon \abs{\rho} = 1, \; \m = \rho \v\},
\end{equation}
where the constraint $\rho \in \{-1,1\}$ codifies the densities of the two immiscible fluids. Returning to the general Tartar framework, given initial/boundary datas, one attempts to construct a strict \emph{subsolution}, that is, $z_0$ satisfying $\mathscr{L}(z_0) = \0$ and taking values in a suitable subset $\mathscr{U}$ of the $\Lambda$-convex hull $K^\Lambda$; usually, $\mathscr{U} = \tp{int} (K^\Lambda)$. One then forms perturbations $z_j$ of $z_0$ by adding localised plane waves where the admissible directions of oscillation are dictated by the \emph{wave cone} $\Lambda$ and $z_j$ take values in $\mathscr{U}$. By a limiting argument, one intends to find infinitely many subsolutions with the prescribed initial/boundary conditions and with values in $K$, which would then yield non-uniqueness of the original system of PDE's for the given boundary/initial datas (see ~\cite{DLS12}).

The \emph{relaxation} of $K$ can be given slightly different meanings, but it is defined here as the smallest set $\tilde{K} \supset K$ that is stable under weak convergence for solutions of \eqref{IPM 1}--\eqref{IPM 3}, essentially following Tartar ~\cite{Tartar}. As such, it models macroscopic averages of solutions of stationary IPM. By a result of Tartar, $\tilde{K}$ contains the lamination convex hull $K^{lc,\Lambda}$ ~\cite[Theorem 8]{Tartar}. We compute the lamination convex hull of stationary IPM in Theorem \ref{Main theorem}, and we believe that as in non-stationary IPM, the lamination and $\Lambda$-convex hulls and the relaxation coincide.

\vspace{0.2cm}
As emphasised  in ~\cite{DLS12,Sze}, precise information on the hull is crucial in identifying the boundary/initial datas for which one can run convex integration. As an example of this we mention that the hull of compressible Euler is notoriously difficult to compute and that to the authors' knowledge, due to insufficient information on the hull, lack of uniqueness has so far only been shown for a set of datas where one is able to reduce to an incompressible system; see ~\cite{FKM, Markfelder}. However, in ~\cite{Markfelder}, Markfelder computed the $\Lambda$-convex hull of a suitably normalised constraint set $K$.

Furthermore, the physical relevance of subsolutions was already emphasised in ~\cite{Sze} in the case of the Muskat problem. The unstable Muskat problem with a flat interface is ill-posed, but in a pioneering work ~\cite{Otto}, Otto had used mass transport techniques to construct macroscopically averaged relaxed solutions that arise as an entropy solution of a scalar conservation law. At a certain asymptotic limit ~\cite[p. 505]{Sze}, Sz\'{e}kelyhidi's subsolutions converge to Otto's relaxed solution. A subsolution can be viewed as a kind of coarse-grained average; this interpretation is explored in detail e.g. in ~\cite{CFM,DLS12,Sze}.

(Topological) smallness of the hull seems to reflect uniqueness of bounded solutions under trivial initial/boundary datas and (in the case of evolutionary models) existence of robust conserved quantities. As an example, in IPM and other active scalar equations with an even Fourier multiplier, $K^\Lambda$ has a non-empty interior ~\cite{Knott,Shvydkoy} and there exist non-trivial bounded (even H\"{o}lder continuous) solutions with compact support in time ~\cite{Shvydkoy,IV}. SQG, in contrast, has an odd multiplier and a trivial hull $K^\Lambda = K$ (defining $\Lambda$ as in ~\cite{Knott,Shvydkoy}), and the Hamiltonian is conserved by $L^3$ solutions, ruling out bounded solutions with compact support in time ~\cite{IV}.

Quadratic $\Lambda$-affine functions are a simple and powerful tool in determining the size of $K^\Lambda$. To illustrate this, while 2D and 3D ideal MHD look superficially similar to Euler equations, both possess a non-trivial quadratic $\Lambda$-affine function which vanishes in $K$, making $\operatorname{int}(K^\Lambda)$ empty. As a direct reflection of this, bounded solutions conserve the mean-square magnetic potential in 2D and the magnetic helicity in 3D. This rules out solutions with a non-trivial, compactly supported magnetic field in 2D but, perhaps surprisingly, not in 3D ~\cite{FLSz}. By Tartar's Theorem (see ~\cite[Theorem 11]{Tartar}), quadratic $\Lambda$-affine functions are weakly continuous, and as such, they also aid the understanding of various asymptotic regimes such as weak limits of (sub)solutions or the inviscid limit; see also ~\cite[p. 58]{CFM}.

\vspace{0.2cm}
In non-stationary IPM, \eqref{IPM 1} is replaced by $\partial_t \rho + \nabla \cdot \m = 0$ and the $\Lambda$-convex hull consists of triples $(\rho,\v,\m)$ such that $\abs{\rho} \le 1$ and $\abs{\m - \rho \v + \left( 0, (1-\rho^2)/2 \right)} \le (1-\rho^2)/2$ ~\cite{Sze}. In particular, the $\Lambda$-convex hull has a non-empty interior.

In stationary IPM, however, $G(\rho,\v,\m) \defeq \m \cdot \v^\perp$ vanishes in $K \cap \Lambda$, enforcing $\operatorname{int}(K^\Lambda) = \emptyset$. Other quadratic $\Lambda$-affine functions of stationary IPM include $\abs{\v}^2 + \rho v_2$ and $\m \cdot (\v + (0,\rho))$--in fact, these three functions determine $\Lambda$ (see Proposition \ref{Wave cone proposition}). If $(\rho,\v,\m) \in K^\Lambda$ with $\v \neq \0$, then $\m \cdot \v^\perp = 0$ yields $\m = k \v$ for some $k \in \R$. The main challenge in the computation of $K^{lc,\Lambda}$ is the determination of the exact range of the constant of proportionality $k$ in $\m = k \v$.

\begin{thm} \label{Main theorem}
$K^{lc,\Lambda} = \cup_{j=1}^4 X_j$, where
\begin{align*}
X_1 &\defeq \left\{ \left( \rho, \0, \frac{1-\rho^2}{2} [\e-(0,1)] \right) \colon \abs{\rho} \le 1, \, \abs{\e} \le 1 \right\}, \\
X_2 &\defeq \left\{ (\rho, \v, k\v) \colon \abs{\rho} \le 1, \, \v \neq \0, \, 1 \le k \le \rho - \frac{(1-\rho^2) v_2}{\abs{\v}^2} \right\}, \\
X_3 &\defeq \left\{ (\rho, \v, k\v) \colon \abs{\rho} < 1, \, \v \neq \0, \, -1 < k = \rho - \frac{(1-\rho^2) v_2}{\abs{\v}^2}  < 1 \right\}, \\
X_4 &\defeq \left\{ (\rho, \v, k\v) \colon \abs{\rho} \le 1, \, \v \neq \0, \, \rho - \frac{(1-\rho^2) v_2}{\abs{\v}^2} \le k \le -1 \right\}.
\end{align*}
\end{thm}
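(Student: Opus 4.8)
The plan is to prove the two inclusions $Y\subseteq K^{lc,\Lambda}$ and $K^{lc,\Lambda}\subseteq Y$, where $Y\defeq\bigcup_{j=1}^4 X_j$; once both are established the sharpness of the ranges of $k$ in $X_2$--$X_4$ follows automatically. For the second inclusion it suffices to show that $Y$ is $\Lambda$-convex and contains $K$, since $K^{lc,\Lambda}$ is the smallest $\Lambda$-convex set containing $K$. I would work from the explicit wave cone of Proposition~\ref{Wave cone proposition}: $(\rho,\v,\m)\in\Lambda$ precisely when either $\v=\0$ and ($\rho=0$ or $m_2=0$), or $\v\neq\0$, $\abs{\v}^2+\rho v_2=0$ and $\m=k\v$ for some $k\in\R$. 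Two features are used constantly: $(0,\0,\m)\in\Lambda$ for \emph{every} $\m\in\R^2$, and a wave-cone direction with nonzero velocity must lie on the quadric $\abs{\v}^2+\rho v_2=0$ (the zero set of one of the $\Lambda$-affine functions). Write $\kappa(\rho,\v)\defeq\rho-(1-\rho^2)v_2/\abs{\v}^2$, so that the diagonal $\{\m=\kappa(\rho,\v)\v\}$ is the common boundary of $X_2,X_3,X_4$; I will use that $\rho\mapsto\kappa(\rho,\v)$ is a parabola with $\kappa(\pm1,\v)=\pm1$ and $\abs{\kappa}\ge1$ at its vertex.

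For $Y\subseteq K^{lc,\Lambda}$ I would argue in four steps. \emph{(i) Diagonal.} For $\abs\rho\le1$, $\v\neq\0$, the state $(\rho,\v,\kappa(\rho,\v)\v)$ is a first-order laminate of two states $(1,\v_1,\v_1),(-1,\v_2,-\v_2)\in K$ with $\v_1,\v_2$ parallel to $\v$, the coefficients being forced by the wave-cone constraint and the requirement that the convex combination have velocity $\v$; when $v_2=0$ this degenerates to the segment from $(1,\v,\v)$ to $(-1,\v,-\v)$, and for $\abs\rho=1$ the point already lies in $K$. In particular $X_3\subseteq K^{lc,\Lambda}$. \emph{(ii) The disk in $X_1$.} A short computation shows that, for $\abs\rho<1$, the state $(\rho,\0,\m)$ is a first-order laminate of $(1,\m/(1+\rho),\m/(1+\rho))$ and $(-1,-\m/(1-\rho),\m/(1-\rho))$ in $K$ exactly when $\abs{\m}^2+(1-\rho^2)m_2=0$, i.e.\ when $\m$ lies on the bounding circle of the disk; joining such boundary states by $\Lambda$-segments (legitimate since $(0,\0,\cdot)\in\Lambda$) fills the entire disk, so $X_1\subseteq K^{lc,\Lambda}$. \emph{(iii) The faces $k=\pm1$.} If $\rho<1$ and $\kappa(\rho,\v)\ge1$ (which forces $v_2<0$), then $(\rho,\v,\v)$ is a $\Lambda$-combination of $(\rho_1,\0,\0)\in X_1$ (obtained in (ii)) with $(1,\v_2,\v_2)\in K$; solving the linear equations gives $\rho_1=\rho+\abs{\v}^2/v_2$, which lies in $[-1,\rho)$ precisely when $\kappa(\rho,\v)\ge1$, and an analogous construction yields the face $k=-1$ of $X_4$. \emph{(iv) Filling.} Since $(\rho,\v,\v)$ and $(\rho,\v,\kappa(\rho,\v)\v)$ differ by $(0,\0,(\kappa-1)\v)\in\Lambda$, the segment between them --- namely $\{(\rho,\v,k\v)\colon1\le k\le\kappa(\rho,\v)\}$ --- lies in $K^{lc,\Lambda}$, and likewise for $X_4$; together with (i)--(iii) this gives $Y\subseteq K^{lc,\Lambda}$.

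For $K^{lc,\Lambda}\subseteq Y$ I would verify that $Y$ is $\Lambda$-convex. Given $a,b\in Y$ with $a-b\in\Lambda$, the wave cone and the identity $(k_a\v_a-k_b\v_b)\times(\v_a-\v_b)=(k_b-k_a)\,\v_a\times\v_b$ reduce the question to a few configurations: both velocities zero (where the slice of $X_1$ at fixed $\rho$ is a disk, hence convex, and $(0,\0,\cdot)\in\Lambda$ makes this sufficient); exactly one velocity zero; and both velocities nonzero with either $k_a=k_b$ or $\v_a\parallel\v_b$. In each case the constraint $\abs{\v_a-\v_b}^2+(\rho_a-\rho_b)(v_{a,2}-v_{b,2})=0$ reduces matters to a one- or two-parameter computation --- for instance, that $\{\rho\colon(\rho,\v,k\v)\in Y\}$ is always an interval, where $\kappa(\pm1,\v)=\pm1$ and the vertex bound are needed. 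I expect this $\Lambda$-convexity check to be the main obstacle, and within it the configurations touching the rigid stratum $X_3$: one must show that a $\Lambda$-segment with an endpoint in $X_3$ either stays on the diagonal or passes immediately into $X_2$ or $X_4$, which in particular forces pairs of $X_3$ states with non-parallel velocities not to be $\Lambda$-connected at all (the wave-cone constraint being incompatible with both endpoints lying on $\{\kappa=k\}$). The explicit laminates in (i)--(iv) and the $\v=\0$ and collinear sub-cases of the $\Lambda$-convexity verification should be routine but somewhat lengthy.
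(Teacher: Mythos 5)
Your overall architecture is sound, and your lower bound is essentially the paper's: the diagonal states as laminates of two parallel-velocity points of $K$, the boundary circle of the disk filled in along $(0,\0,\cdot)\in\Lambda$, and the faces $k=\pm1$ obtained by laminating with $(\psi,\0,\0)$, $\psi=\rho+\abs{\v}^2/v_2$, reproduce Propositions \ref{Proposition on first laminate}--\ref{q for second cone} almost verbatim. Where you genuinely diverge is the upper bound. You propose a purely combinatorial verification that $\cup_{j}X_j$ is closed under $\Lambda$-segments; the paper instead uses the $\Lambda$-convex functions $G_1$ (Sz\'{e}kelyhidi's), $G_3$ and $G_4$ to pin down every point with $\v=\0$ or $\abs{\rho-(1-\rho^2)v_2/\abs{\v}^2}\ge1$ already at the level of $K^\Lambda$ (this is \eqref{Easier part of hull}, a strictly stronger statement than your route yields), and reserves the segment-by-segment analysis for the rigid region alone. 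Your route loses that $K^\Lambda$ information and makes several sub-cases harder than you advertise: for instance, for two points of $X_2$ with equal $k\ge1$ and non-parallel velocities you must show $(\rho_\lambda-k)\abs{\v_\lambda}^2-(1-\rho_\lambda^2)v_{\lambda,2}\ge0$ along the whole segment, a cubic in $\lambda$ that does not follow from the endpoint inequalities by any generic convexity argument; the paper gets it for free from the $\Lambda$-convexity of $G_3$, and doing it bare-handed essentially amounts to rediscovering $G_3$.

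Two concrete under-specifications remain. First, your $\v_a=\v_b=\0$ case only treats $\rho_a=\rho_b$; the wave cone also contains the directions $(\rho,\0,(m_1,0))$ with $\rho\neq0$, so there are $\Lambda$-segments joining different $\rho$-slices of $X_1$, and closing that case needs the (true, but separate) convexity of $\lambda\mapsto m_{\lambda,1}^2+c^2+(1-\rho_\lambda^2)c$ for fixed $c=m_2\le0$. Second, the assertion that two $X_3$ states with non-parallel velocities cannot be $\Lambda$-connected is the crux of the whole theorem and is not a soft incompatibility of ``both endpoints lying on the diagonal'': Proposition \ref{Lemma outside the cones} proves it by a first-order expansion of your $\kappa$ along wave-cone directions and a minimisation over $\e\in S^1$, whose positivity holds precisely because of the rigid-region inequality $\abs{\v}^2+\rho v_2>\abs{v_2}$ -- inside the cones the resulting product $[\abs{\v}^2+(\rho-1)v_2][\abs{\v}^2+(\rho+1)v_2]$ changes sign and the monotonicity fails, which is exactly why $X_2$ and $X_4$ have fat $k$-ranges. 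As written, your proposal correctly identifies the obstacles but leaves unproved the two computations that actually decide the theorem.
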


We interpret $K^{lc,\Lambda}$ geometrically. The projections of $X_2$ and $X_4$ into $\R \times \R^2$ are cones where $\rho \in [-1,1]$ and
\begin{align}
& \abs{\v + \left( 0, \frac{1+\rho}{2} \right)} \le \frac{1+\rho}{2} \quad \Longleftrightarrow \quad \abs{\v}^2 + (\rho+1) v_2 \le 0, \quad (X_2) \label{Cone 1} \\
& \abs{\v - \left( 0, \frac{1-\rho}{2} \right)} \le \frac{1-\rho}{2} \quad \Longleftrightarrow \quad \abs{\v}^2 + (\rho-1) v_2 \le 0. \quad (X_4) \label{Cone 2}
\end{align}
The \emph{power balance} $\abs{\v}^2 + \rho v_2$ can be interpreted as the balance between the density of energy per unit time consumed by friction and the density of work per unit time done by gravity ~\cite{CFM}. Thus Theorem \ref{Main theorem} shows that $\{(\rho,\v,\m) \in K^{lc,\Lambda} \colon \v \neq \0\}$ divides into two subsets: the \emph{flexible region} where $\abs{\v}^2 + \rho v_2 \le \abs{v_2}$ (the cones) and the parameter $k$ in $\m = k \v$ lies on a non-degenerate interval, and the \emph{rigid region}  where the power balance dominates the vertical speed $\abs{v_2}$ and $k$ is uniquely determined. This rigid region is just the projection of $X_3$ into $\R \times \R^2$. When $(\rho,\0,\m) \in K^{lc,\Lambda}$, and thus $z=(\rho,\0,\m) \in X_1 $, the component $\m$ has the same range of values as in non-stationary IPM. Furthermore, the projection of $X_1$ into $\R \times \R^2$ is the line segment that is formed as an intersection of the cones resulting from $X_2$ and $X_4$.

The main technical difficulties of the proof involve the smallness of the set $X_3$. Note that for any suitable pair $(\rho, \v)$ there exists exactly one $\m\in \R^2$ such that $(\rho,\v,\m) \in X_3 $. This makes it very challenging to construct $\Lambda$-convex functions that would show for these $(\rho,\v)$ that the lamination convex and $\Lambda$-convex hull coincide. We nevertheless manage to show coincidence for all other points; see \eqref{Easier part of hull}. The difficulties are also present in Propositions \ref{Lemma outside the cones}--\ref{Lemma 3 outside the cones}, most notably when showing the lamination convexity of $X_3$; this is the technically most difficult part of the paper. 

\vspace{0.2cm}
As another main result, we show that if a subsolution of stationary IPM takes values in $K^{lc,\Lambda}$, it has a vanishing velocity. Recall that $L^2_\sigma(\Omega,\R^2) \defeq \{\w \in L^2(\Omega,\R^2) \colon \nabla \cdot \w = 0, \, \w \cdot \n|_{\partial \Omega} = 0\}$.

\begin{thm} \label{Second main theorem}
Suppose $\Omega \subset \R^2$ is bounded, strongly Lipschitz and simply connected. Suppose $\rho \in L^\infty(\Omega)$ and $\v,\m \in L^2_\sigma(\Omega,\R^2)$ satisfy \eqref{IPM 1}--\eqref{IPM 3}. Suppose $(\rho,\v,\m)(x) \in K^{lc,\Lambda}$ a.e. $x \in \Omega$. Then $\v = \0$ and $\partial_1 \rho = 0$.
\end{thm}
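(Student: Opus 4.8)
The plan is to combine three integration-by-parts identities with the pointwise description of $K^{lc,\Lambda}$ given by Theorem~\ref{Main theorem}; the cone inequalities \eqref{Cone 1}--\eqref{Cone 2} and the strict positivity of the power balance on $X_3$ will carry the argument. First, by \eqref{IPM 3} the field $\v+(0,\rho)\in L^2(\Omega,\R^2)$ is curl free, so simple connectedness of $\Omega$ gives $h\in H^1(\Omega)$ with $\nabla h=\v+(0,\rho)$. Since $\v,\m\in L^2_\sigma(\Omega,\R^2)$ are $L^2$-orthogonal to every gradient field, pairing them with $\nabla h$ and with the constant fields $\nabla x_1,\nabla x_2$ yields
\begin{equation}\label{plan:id}
\int_\Omega\bigl(\abs{\v}^2+\rho v_2\bigr)\d x=\int_\Omega\v\cdot\nabla h\,\d x=0,\qquad
\int_\Omega\bigl(\m\cdot\v+\rho m_2\bigr)\d x=\int_\Omega\m\cdot\nabla h\,\d x=0,
\end{equation}
together with $\int_\Omega v_j\,\d x=\int_\Omega m_j\,\d x=0$ for $j=1,2$.

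Write $P\defeq\abs{\v}^2+\rho v_2$ and $E\defeq\{x\in\Omega\colon\v(x)\neq\0\}$. The sets $X_2,X_3,X_4$ are pairwise disjoint, since membership forces the ratio $k\defeq\m\cdot\v/\abs{\v}^2$ to lie in $[1,\infty)$, in $(-1,1)$, and in $(-\infty,-1]$ respectively; hence $E$ splits up to a null set into measurable sets $E_2,E_3,E_4$, and $\m=k\v$ on $E$. On $E_3$ one has $v_2=(\rho-k)\abs{\v}^2/(1-\rho^2)$, and a short computation gives $P=\tfrac{1-\rho k}{1-\rho^2}\abs{\v}^2>0$, $P+v_2=\tfrac{1-k}{1-\rho}\abs{\v}^2>0$, $P-v_2=\tfrac{1+k}{1+\rho}\abs{\v}^2>0$. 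On $E_2$, \eqref{Cone 1} reads $P+v_2=\abs{\v}^2+(1+\rho)v_2\le0$ (which forces $v_2<0$), while $P-v_2=\abs{\v}^2-(1-\rho)v_2\ge\abs{\v}^2>0$; symmetrically, on $E_4$, \eqref{Cone 2} reads $P-v_2\le0$ and $P+v_2\ge\abs{\v}^2>0$. Finally, on $E^c$ the triple lies in $X_1$, so $\abs{\rho}\le1$, $m_2=\tfrac{1-\rho^2}{2}(e_2-1)\le0$, $\m\cdot\v=0$ and $P=0$; combined with the zero-mean identities this gives $\int_E v_j=0$ and $\int_{E^c}m_2=-\int_E kv_2$.

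From the second identity in \eqref{plan:id}, $\int_E kP=-\int_{E^c}\rho m_2$. Since $m_2\le0$ and $\abs{\rho}\le1$ on $E^c$ we have $m_2\le\rho m_2\le-m_2$ there, so inserting $\int_{E^c}m_2=-\int_E kv_2$ produces the two-sided bound $\int_E k(P-v_2)\,\d x\le0\le\int_E k(P+v_2)\,\d x$. Now estimate $\int_E k(P+v_2)$ from above, using on $E_2$ that $k\ge1$ and $P+v_2\le0$, on $E_3$ that $\abs{k}\le1$ and $P+v_2>0$, and on $E_4$ that $k\le-1$ and $P+v_2>0$:
\[
\int_E k(P+v_2)\le\int_{E_2}(P+v_2)+\int_{E_3}(P+v_2)-\int_{E_4}(P+v_2)=\int_E(P+v_2)-2\int_{E_4}(P+v_2)=-2\int_{E_4}(P+v_2),
\]
because $\int_E(P+v_2)=\int_E P+\int_E v_2=0$ by \eqref{plan:id} and the previous step. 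Since $P+v_2>0$ on $E_4$, this forces $\abs{E_4}=0$; the mirror-image estimate of $\int_E k(P-v_2)$ from below gives $2\int_{E_2}(P-v_2)\le\int_E k(P-v_2)\le0$, hence $\abs{E_2}=0$. Thus $E=E_3$ up to a null set, so $0=\int_E P=\int_{E_3}P$ by \eqref{plan:id}, and as $P>0$ on $E_3$ we conclude $\abs{E}=0$, i.e.\ $\v=\0$ a.e.; feeding this into \eqref{IPM 3} gives $\partial_1\rho=0$.

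I expect the heart of the matter to be exactly Step~3. A one-line sign argument does not work: although $P+\abs{v_2}\ge0$ on $K^{lc,\Lambda}$ with equality only when $\v=\0$, the function $\abs{v_2}$ is not $\Lambda$-affine and its integral over $\Omega$ is not controlled, so the first identity in \eqref{plan:id} alone is useless. One is forced to couple the power-balance identity with the second identity in \eqref{plan:id}, which sees the $\m$-component, and to exploit the information $m_2\le0$, $\abs{\rho}\le1$ on the "reservoir" $X_1$. The subtlety is to make this coupling cooperate simultaneously with the two cone regions $X_2,X_4$ and with the rigid region $X_3$; the two-sided squeeze $\int_E k(P-v_2)\le0\le\int_E k(P+v_2)$ is what balances the contributions of $X_2$ against $X_4$, and the strict positivity $P>0$ on $X_3$—a manifestation of the smallness of $X_3$ that is also the source of the difficulties in Theorem~\ref{Main theorem}—is precisely what eliminates the last region.
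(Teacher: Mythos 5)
Your argument is correct, but it is assembled differently from the paper's proof. The paper uses only two integral identities: $\int_\Omega m_2=0$ (pairing $\m$ with $\nabla x_2$) and Elgindi's identity $\int_\Omega\abs{\v}^2=-\int_\Omega\rho v_2$ from \eqref{Elgindi's computation}. It then feeds the pointwise bounds $kv_2\le v_2$ on $X_2$, $kv_2\le -v_2$ on $X_4$ and the exact value of $kv_2$ on $X_3$ into a single sandwich
$0\le\int_\Omega\abs{\v}^2\le\int_{X_2}(1-\rho)v_2-\int_{X_4}(1+\rho)v_2-\int_{X_3}(1-\rho^2)v_2^2/\abs{\v}^2\le0$,
killing all three regions at once. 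You instead bring in the additional identity $\int_\Omega\m\cdot(\v+(0,\rho))=0$ --- in effect integrating the quadratic $\Lambda$-affine function $\m\cdot(\v+(0,\rho))$ mentioned in the introduction --- and, writing $P=\abs{\v}^2+\rho v_2$, set up the two-sided squeeze $\int_E k(P-v_2)\le0\le\int_E k(P+v_2)$, which eliminates $E_4$, then $E_2$, then $E_3$ in succession. Both routes rest on the same structural facts (the sign of $v_2$ on the two cones, the strict positivity of the power balance on $X_3$, and orthogonality of $L^2_\sigma$ to gradient fields on a bounded, simply connected, strongly Lipschitz domain), so yours is a genuine but closely related variant: the paper's version is shorter because it never needs the $\m\cdot\v$ pairing, while yours makes the role of each region $X_2,X_3,X_4$ more transparent by disposing of them one at a time.

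One detail to tighten: the cone inequalities \eqref{Cone 1}--\eqref{Cone 2} are not literally valid on all of $X_2$ and $X_4$. At $\rho=1$ a point of $X_2$ only satisfies $k=1$ with $\v\neq\0$ otherwise arbitrary, so your claims that $P+v_2\le0$ and $v_2<0$ on $E_2$ can fail on $E_2\cap\{\rho=1\}$ (symmetrically on $E_4\cap\{\rho=-1\}$). This is harmless --- there $k=1$ forces $k(P+v_2)=P+v_2$ and $P-v_2=\abs{\v}^2>0$, so every inequality you actually use survives --- but you should phrase it as the dichotomy ``either $\rho=1$ or $v_2<0$'', as the paper does, rather than asserting the cone inequality outright.
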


The conclusion of Theorem \ref{Second main theorem} also holds in $\T^1 \times (-1,1)$, extending the dichotomy on solutions vanishing outside a strip into subsolutions (see Remark \ref{Dichotomy remark}).

Motivated by Elgindi's computations in ~\cite{Elgindi} as well as Theorem \ref{Second main theorem}, we also discuss the infinite time limit of non-stationary IPM. We show in Proposition \ref{Third main theorem} that if $\rho \in L^\infty(0,\infty;L^\infty)$ and $\v,\m \in L^\infty(0,\infty;L^2_\sigma)$ form a subsolution in a bounded domain $\Omega$, then $\v \in L^2(0,\infty;L^2_\sigma)$; in particular, $\lim_{M \to \infty} \int_M^\infty \int_\Omega |\v(x,t)|^2 \d x \d t = 0$.

The structure of the paper is as follows. The relevant definitions are recalled in \textsection \ref{Relevant notions}, where we also compute the wave cone. The inclusion $K^{lc,\Lambda} \supset \cup_{j=1}^4 X_j$ is proved in \textsection \ref{Estimating the hull from below} whereas $K^{lc,\Lambda} \subset \cup_{j=1}^4 X_j$ is proved in \textsection \ref{Estimating the hull from above}. The proof of Theorem \ref{Second main theorem} is presented in \textsection \ref{Non-existence of non-trivial solutions in bounded domains}, and the limit $t \to \infty$ of non-stationary IPM is studied in \textsection \ref{Relation to the limit of non-stationary IPM}.

\section{Relevant notions} \label{Relevant notions}
We briefly recall some notions from the theory of differential inclusions; a thorough discussion of related topics can be found in ~\cite{Kirchheim}.

The \emph{wave cone} $\Lambda$  consists of directions $z = (\rho, \v, \m) \in \R \times \R^2 \times \R^2$ such that for some $\xi \in \R^2 \setminus \{\0\}$, plane waves of the form $x \mapsto h(x \cdot \xi) z \colon \R^2 \to \R \times \R^2 \times \R^2$ satisfy \eqref{IPM 1}--\eqref{IPM 3} for all $h \in C^\infty(\R)$. Denoting $(\xi_1,\xi_2)^\perp = (-\xi_2,\xi_1)$, the wave cone conditions are thus
\begin{align}
&  \m \cdot \xi = 0, \label{Wave cone 1} \\
& \v \cdot \xi = 0, \label{Wave cone 2} \\
& (\v + (0,\rho)) \cdot \xi^\perp = 0. \label{Wave cone 3}
\end{align}
An explicit form of $\Lambda$ is given in Proposition \ref{Wave cone proposition} and Corollary \ref{Wave cone corollary}.

\begin{prop} \label{Wave cone proposition}
The wave cone of stationary IPM is
\begin{align*}
\Lambda
&= \{(\rho,\v,\m) \colon \abs{\v + (0,\rho/2)} = \abs{\rho}/2, \quad \m \cdot \v^\perp = 0 \quad \text{and} \quad \m \cdot (\v + (0,\rho)) = 0 \}. 
\end{align*}
\end{prop}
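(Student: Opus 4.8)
The plan is to treat membership in $\Lambda$ as a solvability question: a triple $(\rho,\v,\m)$ lies in the wave cone exactly when the homogeneous linear system \eqref{Wave cone 1}--\eqref{Wave cone 3} in the unknown direction $\xi$ has a nonzero solution. Since the $\xi$-conditions are homogeneous, this can be decided by elementary linear algebra in $\R^2$, where each orthogonality constraint is very rigid. I would organize the argument around whether $\v = \0$.

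Suppose first $\v \neq \0$. Then \eqref{Wave cone 2} forces $\xi$ to be a nonzero scalar multiple of $\v^\perp$, so after rescaling we may take $\xi = \v^\perp$. Substituting into \eqref{Wave cone 1} and \eqref{Wave cone 3} (using $(\v^\perp)^\perp = -\v$) turns them into $\m \cdot \v^\perp = 0$ and $\abs{\v}^2 + \rho v_2 = 0$, and completing the square rewrites the latter as $\abs{\v + (0,\rho/2)} = \abs{\rho}/2$. The remaining displayed condition $\m \cdot (\v + (0,\rho)) = 0$ is then automatic: $\m \cdot \v^\perp = 0$ with $\v \neq \0$ forces $\m = k\v$ for a scalar $k$, whence $\m \cdot (\v + (0,\rho)) = k(\abs{\v}^2 + \rho v_2) = 0$. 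Conversely, if $\v \neq \0$ and the three displayed identities hold, one takes $\xi \defeq \v^\perp \neq \0$ and verifies \eqref{Wave cone 1}--\eqref{Wave cone 3} directly. This settles both inclusions on the stratum $\v \neq \0$.

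When $\v = \0$, the system \eqref{Wave cone 1}--\eqref{Wave cone 3} collapses to $\m \cdot \xi = 0$ and $\rho\xi_1 = 0$. If $\rho = 0$, any $\xi$ orthogonal to $\m$ serves, so every such triple lies in $\Lambda$; and indeed all three displayed identities hold trivially in this case. If $\rho \neq 0$, one is forced to $\xi_1 = 0$, i.e. $\xi = (0,\xi_2)$ with $\xi_2 \neq 0$, and then $\m \cdot \xi = 0$ is equivalent to $m_2 = 0$, which is precisely $\m \cdot (\v + (0,\rho)) = 0$ when $\v = \0$; the other two displayed identities again hold trivially. Hence the stated formula reproduces $\Lambda$ on each degenerate stratum as well, and combining the cases gives the claim.

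I do not expect a genuine obstacle: the proof is a short case analysis, and the only point requiring care is the bookkeeping of the degenerate configurations $\v = \0$, $\rho = 0$, $\m = \0$, checking that the clean formula -- with the \emph{equality} $\abs{\v + (0,\rho/2)} = \abs{\rho}/2$ together with the two quadratic conditions -- neither omits nor adds any of them.
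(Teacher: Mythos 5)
Your proof is correct and follows essentially the same route as the paper: both reduce membership in $\Lambda$ to solving the homogeneous linear system \eqref{Wave cone 1}--\eqref{Wave cone 3} for a nonzero $\xi$ and handle the degenerate strata by a short case analysis (you organize the cases by whether $\v=\0$ and take $\xi=\v^\perp$ uniformly when $\v\neq\0$, while the paper parametrizes $\v=k\xi^\perp$, $\v+(0,\rho)=\ell\xi$ and splits the converse into four choices of $\xi$, but the computations are the same). No gaps.
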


\begin{proof}
First assume $(\rho,\v,\m) \in \Lambda$. The conditions \eqref{Wave cone 2}--\eqref{Wave cone 3} imply that $\v = k \xi^\perp$ and $\v+(0,\rho) = \ell \xi$ for some $k,\ell \in \R$. Thus $\abs{\v}^2 + \rho v_2 = k \ell \xi \cdot \xi^\perp = 0$, giving $\abs{\v + (0,\rho/2)} = \abs{\rho}/2$. If $\rho = 0$, then $\v = \0$ and so clearly $\m \cdot (\v + (0,\rho)) = \m \cdot \v^\perp = 0$. If $\rho \neq 0$, then \eqref{Wave cone 1}--\eqref{Wave cone 3} give $\m \cdot (\v + (0,\rho)) = \ell \m \cdot \xi = 0$ and $\m \cdot \v^\perp = - k \m \cdot \xi = 0$. 

Conversely, if $\abs{\v}^2 + \rho v_2 = \m \cdot (\v+(0,\rho)) = \m \cdot \v^\perp = 0$, then we get $(\rho,\v,\m) \in \Lambda$ by choosing $\xi = \v + (0,\rho)$ if $\v \neq (0,-\rho) \neq \0$, $\xi = \v^\perp$ if $\v=(0,-\rho) \neq \0$, $\xi =\m^\perp $ if $(0,\rho)=\0=\v$ and $\m\neq \0$, and finally $\xi=(1,1)$ if $\v=(0,\rho)=\m=\0$.
\end{proof}

\begin{cor} \label{Wave cone corollary}
The wave cone $\Lambda$ consists of vectors $z \in \R \times \R^2 \times \R^2$ of the following three forms:
\begin{align*}
& z = \left( \rho, \frac{\rho}{2} (\e-(0,1)), \ell (\e-(0,1)) \right), \quad \rho \neq 0, \, \e \in S^1 \setminus \{(0,1)\}, \, \ell \in \R, \\
& z = (\rho, \0, (m_1,0)), \quad \rho \neq 0, \, m_1 \in \R, \\
& z = (0,\0,\m), \quad \m \in \R^2.
\end{align*}
\end{cor}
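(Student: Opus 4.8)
The plan is to read off the three explicit forms directly from Proposition \ref{Wave cone proposition} by a short case distinction according to whether $\rho$ and $\v$ vanish, using that in $\R^2$ the relation $\m \cdot \v^\perp = 0$ forces $\m$ to be a scalar multiple of $\v$ whenever $\v \neq \0$. Throughout I would use two elementary reformulations of the first condition of Proposition \ref{Wave cone proposition}: expanding the square shows that $\abs{\v + (0,\rho/2)} = \abs{\rho}/2$ is equivalent to $\abs{\v}^2 + \rho v_2 = 0$; and, when $\rho \neq 0$, it is equivalent to $\v + (0,\rho/2) = \tfrac{\rho}{2}\e$ for some $\e \in S^1$, in which case $\v = \tfrac{\rho}{2}(\e - (0,1))$, $\v + (0,\rho) = \tfrac{\rho}{2}(\e + (0,1))$, and $\v = \0$ exactly when $\e = (0,1)$.

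First I would treat $\rho = 0$: the first condition forces $\v = \0$, the other two become vacuous, and we obtain precisely the vectors $(0,\0,\m)$ with $\m \in \R^2$ arbitrary, which is the third form. Next I would take $\rho \neq 0$ and $\v = \0$: then $\m \cdot \v^\perp = 0$ is automatic while $\m \cdot (\v + (0,\rho)) = \rho m_2 = 0$ gives $m_2 = 0$, so we get exactly the vectors $(\rho,\0,(m_1,0))$ with $m_1 \in \R$, the second form. In both cases the converse inclusion, that every such vector lies in $\Lambda$, is immediate from Proposition \ref{Wave cone proposition}.

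Finally I would consider $\rho \neq 0$ and $\v \neq \0$. Writing $\v = \tfrac{\rho}{2}(\e - (0,1))$ with $\e \in S^1 \setminus \{(0,1)\}$ as above, the condition $\m \cdot \v^\perp = 0$ becomes $\m = c\v$ for some $c \in \R$, i.e.\ $\m = \ell(\e - (0,1))$ with $\ell \defeq c\rho/2$ ranging over all of $\R$; and then the last condition is automatic, since $\m \cdot (\v + (0,\rho)) = c\,(\abs{\v}^2 + \rho v_2) = 0$. This gives exactly the first form. Conversely, any $z$ of the first form satisfies the three conditions of Proposition \ref{Wave cone proposition}: $\abs{\v + (0,\rho/2)} = \tfrac{\abs{\rho}}{2}\abs{\e} = \tfrac{\abs{\rho}}{2}$, $\m$ is parallel to $\v$, and $\m \cdot (\v + (0,\rho)) = \tfrac{\ell \rho}{2}(\abs{\e}^2 - 1) = 0$. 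Assembling the three cases yields the corollary.

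There is no real obstacle here; the statement is essentially a rewriting of Proposition \ref{Wave cone proposition}. The only points needing a little care are the parametrisation $\v + (0,\rho/2) = \tfrac{\rho}{2}\e$ together with the bookkeeping that $\e = (0,1)$ is precisely the case $\v = \0$ (so no third-form-like sub-case is hidden here) and that the degenerate value $\e = (0,-1)$, where $\v + (0,\rho) = \0$, requires no separate treatment; and the observation that once $\m$ is forced parallel to $\v$, the third wave cone condition collapses to $\abs{\v}^2 + \rho v_2 = 0$ and therefore holds for free.
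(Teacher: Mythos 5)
Your proof is correct and is essentially the argument the paper intends: the corollary is stated as a direct unpacking of Proposition \ref{Wave cone proposition} (the paper gives no separate proof), and your case distinction on $\rho$ and $\v$, together with the parametrisation $\v+(0,\rho/2)=\tfrac{\rho}{2}\e$ and the observation that $\m\cdot\v^\perp=0$ forces $\m$ parallel to $\v$ when $\v\neq\0$, is exactly the intended derivation. The bookkeeping points you flag ($\e=(0,1)$ corresponding to $\v=\0$, and the third condition collapsing to $\abs{\v}^2+\rho v_2=0$) are handled correctly.
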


\begin{rem} \label{Wave cone remark}
The first condition in Proposition \ref{Wave cone proposition} can be written as $\abs{\v}^2 + \rho v_2 = 0$.
\end{rem}

Given any compact set $C \subset \R \times \R^2 \times \R^2$, the \emph{laminates} $C^{k,\Lambda}$, $k \in \N_0$, of $C$ are defined as follows:
\begin{align*}
& C^{0,\Lambda} \defeq C, \\
& C^{k+1,\Lambda} \defeq \{(\lambda z_1 + (1-\lambda) z_2 \colon z_1,z_2 \in C^{k,\Lambda}, \, z_1-z_2 \in \Lambda, \, \lambda \in [0,1]\}. 
\end{align*}
The \emph{lamination convex hull} of $C$ is defined as
\[C^{lc,\Lambda} \defeq \cup_{k=0}^\infty C^{k,\Lambda}.\]
Recall also that a function $G \colon \R \times \R^2 \times \R^2 \to \R$ is said to be \emph{$\Lambda$-convex} if $t \mapsto G(z_0 + tz) \colon \R \to \R$ is convex for every $z_0 \in \R \times \R^2 \times \R^2$ and $z \in \Lambda$. The \emph{$\Lambda$-convex hull} $C^\Lambda$ consists of points $z \in \R \times \R^2 \times \R^2$ that cannot be separated from $C$ by a $\Lambda$-convex function. More precisely, $z \notin C^\Lambda$ if and only if there exists a $\Lambda$-convex function $G$ such that $G|_C \le 0$ but $G(z) > 0$. We have $C^\Lambda \supset C^{lc,\Lambda}$.

\begin{rem} \label{Remark on hulls}
Denote the wave cone of non-stationary IPM by $\Lambda_{\operatorname{ns}}$. The constitutive set $K$ is the same in stationary and non-stationary IPM but $\Lambda \subset \Lambda_{\operatorname{ns}}$, so that we immediately get $K^{lc,\Lambda} \subset K^{lc,\Lambda_{\operatorname{ns}}}$ and $K^\Lambda \subset K^{\Lambda_{\text{ns}}}$.
\end{rem}

If $\rho \in L^\infty(\Omega)$ and $\v, \m \in L^2_\sigma(\Omega,\R^2)$ satisfy \eqref{IPM 1}--\eqref{IPM 3} and $z(x) = (\rho,\v,\m)(x) \in K^\Lambda$ a.e. $x \in \Omega$, then $z$ is called a \emph{subsolution of stationary IPM}.

\section{Estimating the hull from below} \label{Estimating the hull from below}
We wish to first show that $K^{lc,\Lambda}$ contains the set $\cup_{j=1}^4 X_j$ described in Theorem \ref{Main theorem}. We begin by computing the first laminate.

\begin{prop} \label{Proposition on first laminate}
We have
\begin{align*}
K^{1,\Lambda}
&= \left\{ (\rho, \0, \m ) \colon \abs{\rho} \le 1, \quad \m = \frac{1-\rho^2}{2} (\e-(0,1)), \quad \abs{\e} = 1 \right\} \\
&\bigcup \left\{ (\rho, \v, \m) \colon \abs{\rho} \le 1, \quad \m = \left[ \rho - \frac{(1-\rho^2) v_2}{\abs{\v}^2} \right] \v \right\}.
\end{align*}
\end{prop}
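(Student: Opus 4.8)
The plan is to unwind the definition
$K^{1,\Lambda} = \{\lambda z_1 + (1-\lambda)z_2 : z_1,z_2\in K,\ z_1-z_2\in\Lambda,\ \lambda\in[0,1]\}$
and compute directly, parametrising admissible difference vectors by the explicit list in Corollary \ref{Wave cone corollary}. Write the two endpoints as $z_i = (\rho_i,\v_i,\rho_i\v_i)$ with $\rho_i\in\{-1,1\}$, so that $z_1-z_2$ has first component $\rho_1-\rho_2\in\{-2,0,2\}$. If $\rho_1=\rho_2$ then $z_1-z_2=(0,\v_1-\v_2,\rho_1(\v_1-\v_2))$, and the first wave-cone condition (Remark \ref{Wave cone remark}, with $\rho=0$) forces $\v_1=\v_2$; this case contributes only points of $K$, which already lie in the claimed set (the bracket $\rho-(1-\rho^2)v_2/|\v|^2$ equals $\rho$ when $v_2=0$). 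Since $\Lambda=-\Lambda$, swapping $z_1$ and $z_2$ leaves the set of convex combinations unchanged, so it suffices to treat $\rho_1=1$, $\rho_2=-1$, where $z_1-z_2=(2,\v_1-\v_2,\v_1+\v_2)$.

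Because the first component equals $2\neq0$, Corollary \ref{Wave cone corollary} leaves exactly two branches. In the branch $z_1-z_2=(2,\0,(m_1,0))$ we get $\v_1=\v_2=:\v$ with $v_2=0$, and the convex combination is $(\rho,\v,\rho\v)$ with $\rho=2\lambda-1\in[-1,1]$ — the $v_2=0$ instance of the second set. In the branch $\v_1-\v_2=\e-(0,1)$, $\v_1+\v_2=\ell(\e-(0,1))$ with $\e\in S^1\setminus\{(0,1)\}$ and $\ell\in\R$, writing $\w\defeq\e-(0,1)$ one solves $\v_i=\tfrac12(\ell\pm1)\w$ and computes that the convex combination is $\bigl(\rho,\tfrac{\ell+\rho}{2}\w,\tfrac{\rho\ell+1}{2}\w\bigr)$, again with $\rho=2\lambda-1\in[-1,1]$. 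The one genuine computation is the elementary identity $|\w|^2=-2w_2$ (immediate from $\e\in S^1$); it gives $v_2/|\v|^2=-1/(\ell+\rho)$ when $\ell+\rho\neq0$, whence $\m=\tfrac{\rho\ell+1}{\ell+\rho}\v=\bigl(\rho-\tfrac{(1-\rho^2)v_2}{|\v|^2}\bigr)\v$, i.e. a point of the second set; and when $\ell+\rho=0$ it gives $\v=\0$, $\m=\tfrac{1-\rho^2}{2}\w=\tfrac{1-\rho^2}{2}(\e-(0,1))$, a point of the first set. This proves the inclusion $\subseteq$.

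For the reverse inclusion one runs the same algebra backwards. A point $(\rho,\0,\tfrac{1-\rho^2}{2}(\e-(0,1)))$ of the first set with $\e\neq(0,1)$ arises from the second branch with $\ell=-\rho$, $\lambda=\tfrac{\rho+1}{2}$; the degenerate case $\e=(0,1)$ is the point $(\rho,\0,\0)$, obtained from the first branch with $m_1=0$. A point $(\rho,\v,k\v)$ of the second set with $v_2=0$ is the point $(\rho,\v,\rho\v)$, obtained from the first branch; when $v_2\neq0$ one reverse-engineers $\ell=-\rho-|\v|^2/v_2$ and $\e=(0,1)-\tfrac{2v_2}{|\v|^2}\v$, checks $|\e|=1$ from the same identity $|\w|^2=-2w_2$, sets $\lambda=\tfrac{\rho+1}{2}\in[0,1]$, and verifies that the resulting endpoints $z_1=(1,\tfrac{\ell+1}{2}\w,\tfrac{\ell+1}{2}\w)$, $z_2=(-1,\tfrac{\ell-1}{2}\w,-\tfrac{\ell-1}{2}\w)$ satisfy $z_1-z_2\in\Lambda$ and $\lambda z_1+(1-\lambda)z_2=(\rho,\v,k\v)$. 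I expect the only real difficulty here to be bookkeeping: keeping the branches of $\Lambda$ straight, and handling the overlapping degenerate cases $\v=\0$, $v_2=0$, and $\e=(0,1)$, where the bracketed formula in the statement is to be read via the first set.
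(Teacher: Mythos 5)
Your proof is correct and takes essentially the same route as the paper's: both unwind the definition of $K^{1,\Lambda}$ and reduce to the explicit description of the wave cone. The only difference is bookkeeping — you parametrise by the endpoints and run through the branches of Corollary \ref{Wave cone corollary}, whereas the paper writes the generic nontrivial convex combination as $(\rho,\v,\rho\v+(1-\rho^2)\w)$ and solves the three conditions of Proposition \ref{Wave cone proposition} for $\w$, which packages both inclusions into a single equivalence.
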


\begin{proof}
A general convex combination of two elements of $K$ is either an element of $K$ or of the form
\begin{align}
\left( \rho, \v, \rho \v +(1-\rho^2) \w \right)
&= \frac{1+\rho}{2} \left( 1, \v + (1-\rho) \w, \v + (1-\rho) \w \right) \label{Lambda convex 1} \\
&+ \frac{1-\rho}{2} \left( -1, \v - (1+\rho) \w, -\v + (1+\rho) \w \right), \label{Lambda-convex 2}
\end{align}
where $\abs{\rho} < 1$ and $\v, \w \in \R^2$.

The linear combination in \eqref{Lambda convex 1}--\eqref{Lambda-convex 2} is $\Lambda$-convex if and only if $(2,2\w,2\v - 2 \rho \w) \in \Lambda$. By Proposition \ref{Wave cone proposition}, this occurs precisely when $\abs{\w+(0,1/2)} = 1/2$, $\w \cdot \v^\perp = 0$ and $\v \cdot (\w + (0,1)) = 0$.

If $\v = \0$, the wave cone conditions are equivalent to $\w = (\e-(0,1))/2$ with $\abs{\e} = 1$, whereas in the case $\v \neq \0$ they are equivalent to $\w = -(v_2/\abs{\v}^2) \v$, which completes the proof.
\end{proof}

By Corollary \ref{Wave cone corollary} and Proposition \ref{Proposition on first laminate}, $K^{lc,\Lambda} \supset X_1 \cup X_3$. The next two propositions, combined with Corollary \ref{Wave cone corollary}, show that $K^{lc,\Lambda} \supset X_2 \cup X_4$.

\begin{prop} \label{q for first cone}
Suppose $\abs{\rho} < 1$ and $\v \neq \0$ with
\begin{equation} \label{Cone condition clumsy}
\rho - \frac{(1-\rho^2) v_2}{\abs{\v}^2} \ge 1.
\end{equation}
Then
\[\left( \rho, \v, \v \right) \in K^{lc,\Lambda}.\]
\end{prop}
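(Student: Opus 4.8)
The plan is to exhibit $(\rho,\v,\v)$ as the endpoint of a finite lamination starting from $K^{1,\Lambda}$, exploiting the fact that $X_3$ already lies in $K^{lc,\Lambda}$ by Proposition \ref{Proposition on first laminate}. The geometric picture is that $X_2$ (with $k=1$, i.e. the point $(\rho,\v,\v)$) can be reached by laminating between a point of $X_3$ and a suitable point in $K$ or in a lower laminate. First I would fix $\v \neq \0$ and look for the admissible range of densities: set $q(\sigma) \defeq \sigma - (1-\sigma^2) v_2/\abs{\v}^2$, so that $(\sigma,\v,q(\sigma)\v) \in X_3$ whenever $\abs{\sigma}<1$ and $\abs{q(\sigma)}<1$, and note that the hypothesis \eqref{Cone condition clumsy} says $q(\rho) \ge 1$. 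I expect $q$ to be continuous and (for this fixed $\v$) monotone in $\sigma$ on the relevant range, so there is a threshold $\sigma_* \in (\rho,1]$ with $q(\sigma_*)=1$; for $\sigma$ slightly below $\sigma_*$ the point $(\sigma,\v,q(\sigma)\v)$ lies in $X_3 \subset K^{lc,\Lambda}$.

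The key step is then to produce $(\rho,\v,\v)$ as a $\Lambda$-convex combination of two points already known to be in $K^{lc,\Lambda}$, with $\v$ held fixed throughout so that the difference vector has zero $\v$-component. Concretely I would try to write
\[
(\rho,\v,\v) = \lambda\, (\sigma,\v,q(\sigma)\v) + (1-\lambda)\, (\tau, \v, c\,\v)
\]
for appropriate $\sigma,\tau,c$ and $\lambda \in (0,1)$, where the second point is chosen in $K$ (so $\tau = \pm 1$ and $c\v = \tau\v$) or in $X_2 \cup X_3$. The difference of the two summands has the form $(\sigma-\tau,\, \0,\, (q(\sigma)-c)\v)$; by Corollary \ref{Wave cone corollary} (second family) such a vector lies in $\Lambda$ provided $\v = \0$ — which fails — so instead the difference must be realised via the \emph{first} family of Corollary \ref{Wave cone corollary}, which forces the $\v$-components to differ. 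Hence a single lamination step keeping $\v$ fixed is impossible, and one must vary $\v$ as well: the right move is a two-step lamination, first splitting off a piece that adjusts $\v$ (staying inside the cone governed by \eqref{Cone 1}) and then a second split landing at $\m=\v$. I would look for the first split to land at two points of the form $(\rho_\pm, \v_\pm, \m_\pm)$ with $\v_\pm$ collinear with $\v$ and $\v = \frac{1+\rho}{2}\v_+ + \frac{1-\rho}{2}\v_-$, arranged so that both endpoints satisfy $\abs{q}<1$ (hence lie in $X_3$, already available) or lie in $K$.

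The main obstacle, as I see it, is bookkeeping the constraints: one needs the intermediate points to genuinely lie in $K^{lc,\Lambda}$ (i.e. in $X_1 \cup X_3$ or $K$, the sets already secured), the successive difference vectors to lie in $\Lambda$ — which via Proposition \ref{Wave cone proposition} imposes the three quadratic relations $\abs{\v}^2+\rho v_2 = 0$ on the \emph{difference}, $\m\cdot\v^\perp=0$, $\m\cdot(\v+(0,\rho))=0$ — and the convexity coefficients to stay in $[0,1]$. The cleanest route is probably to parametrise everything by the density: fix the direction $\hat\v = \v/\abs{\v}$ and the scalar $t = \abs{\v}$, observe that along the wave-cone direction from Corollary \ref{Wave cone corollary} both $\rho$ and $\abs{\v}$ move together, and show by a direct computation that moving from the hypothesised point toward $K$ (where $\m = \v$, $\abs{\rho}=1$) one passes only through points whose $q$-value has modulus $<1$, so that they lie in $X_3$. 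Then $(\rho,\v,\v)$ is visibly in $K^{2,\Lambda} \subset K^{lc,\Lambda}$. I would carry out the steps in the order: (1) record $q$, its monotonicity in $\sigma$, and the threshold $\sigma_*$; (2) identify the target point $(\sigma_*,\v_*,\v_*) \in \overline{K^{1,\Lambda}} \cap X_3$-closure and check $(\rho,\v,\v)$ lies on a wave-cone segment joining it to a point of $K$; (3) verify the wave-cone membership of the two difference vectors via Proposition \ref{Wave cone proposition}; (4) solve for $\lambda \in [0,1]$ and conclude.
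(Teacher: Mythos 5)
Your write-up is a plan rather than a proof: every load-bearing step is left conditional (``I would try'', ``probably'', ``I expect''), and no $\Lambda$-convex decomposition of $(\rho,\v,\v)$ is ever actually produced or verified. Beyond that, there are concrete problems. First, your reading of the second family in Corollary \ref{Wave cone corollary} is off: when the densities differ, a difference vector $(\sigma-\tau,\0,(q(\sigma)-c)\v)$ lies in $\Lambda$ if and only if its $\m$-component is horizontal, i.e.\ (since $v_2\neq 0$ under \eqref{Cone condition clumsy}) if and only if $q(\sigma)=c$ --- not ``provided $\v=\0$''. Your conclusion that a fixed-$\v$ splitting cannot move $k$ survives, but for this reason. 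Second, invoking points of ``$\overline{K^{1,\Lambda}}\cap X_3$-closure'' is unjustified as stated: $K^{lc,\Lambda}=\cup_k K^{k,\Lambda}$ need not be closed, so boundary points of $X_3$ are not automatically available. (They happen to be available here because the second set in Proposition \ref{Proposition on first laminate} carries no restriction $\abs{k}<1$, so the point $(\sigma_*,\v_*,\v_*)$ with $q=1$ lies in $K^{1,\Lambda}$ itself --- but you would need to say that, not take closures.) Third, the two-step scheme you sketch --- endpoints with velocities collinear with $\v$, weights $\tfrac{1\pm\rho}{2}$, densities driven to $\pm1$ --- just reproduces a point of $K^{1,\Lambda}$, which by Proposition \ref{Proposition on first laminate} forces $\m=q(\rho)\v\neq\v$ whenever $q(\rho)>1$; so that route is a dead end, and the remaining ``solve the system'' step is exactly the unproved content of the proposition. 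Also, the asserted monotonicity of $q$ in $\sigma$ is false in general, since $q'(\sigma)=1+2\sigma v_2/\abs{\v}^2$ can change sign on $[-1,1]$.

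The idea you are missing is that one of the two endpoints should have \emph{zero} velocity. The paper writes $(\rho,\v,\v)=\lambda\,(1,\v/\lambda,\v/\lambda)+(1-\lambda)(\psi,\0,\0)$, where the first point lies in $K$ and the second lies in $K^{1,\Lambda}$ (take $\e=(0,1)$ in Proposition \ref{Proposition on first laminate}). The difference $(1-\psi,\v/\lambda,\v/\lambda)$ has parallel velocity and $\m$ components, so the only wave-cone condition is $\abs{\v/\lambda}^2+(1-\psi)v_2/\lambda=0$; together with $\lambda+(1-\lambda)\psi=\rho$ this pins down $\lambda=\abs{\v}^2/(\abs{\v}^2-(1-\rho)v_2)\in(0,1)$ and $\psi=(\abs{\v}^2+\rho v_2)/v_2$, and the hypothesis \eqref{Cone condition clumsy} is precisely equivalent to $\psi\ge-1$ (with $v_2<0$ forced). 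This exhibits $(\rho,\v,\v)\in K^{2,\Lambda}\subset K^{lc,\Lambda}$ in one short computation, which is what your proposal never delivers.
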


\begin{proof}
Suppose \eqref{Cone condition clumsy} holds. As a consequence, $v_2 < 0$. Let us write
\[\left( \rho, \v, \v \right) 
= \lambda \left( 1, \frac{\v}{\lambda}, \frac{\v}{\lambda} \right) + (1-\lambda) (\psi, \0, \0),\]
where $0 < \lambda < 1$ and
\begin{equation} \label{psi}
\psi = \frac{\rho-\lambda}{1-\lambda}.
\end{equation}
We need to choose $\lambda$ in such a way that $-1 \le \psi < \rho$ and $z_1-z_2 = ( 1-\psi, \v/\lambda, \v/\lambda) \in \Lambda$.

By Proposition \ref{Wave cone proposition} and Remark \ref{Wave cone remark}, $z_1-z_2 \in \Lambda$ is equivalent to
\[\frac{\v}{\lambda} \cdot \left[ \frac{\v}{\lambda} + (0,1-\psi) \right] = 0.\]
In conjunction with \eqref{psi}, this leads to the choices
\[\lambda = \frac{\abs{\v}^2}{\abs{\v}^2 - (1-\rho) v_2}, \qquad \psi = \frac{\abs{\v}^2 + \rho v_2}{v_2}.\]
Note that \eqref{Cone condition clumsy} holds if and only if $\abs{\v}^2 + (1+\rho) v_2 \le 0$ if and only if $\psi \ge -1$. Since $v_2 < 0$, we also have $0 < \lambda < 1$. Furthermore, $\psi = (\rho-\lambda)/(1-\lambda) < \rho$ since $\rho < 1$.
\end{proof}

\begin{prop} \label{q for second cone}
Suppose $\abs{\rho} \le 1$ and $\v \neq \0$ with
\begin{equation} \label{Cone condition clumsy 2}
\rho - \frac{(1-\rho^2) v_2}{\abs{\v}^2} \le - 1.
\end{equation}
Then
\[\left( \rho, \v, -\v \right) \in K^{lc,\Lambda}.\]
\end{prop}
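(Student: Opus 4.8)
The plan is to reduce the statement to Proposition \ref{q for first cone} by exploiting a linear symmetry of stationary IPM. Consider the linear involution $T(\rho,\v,\m) = (-\rho,-\v,\m)$. First I would check that $T$ maps $K$ onto $K$: if $\abs{\rho}=1$ and $\m=\rho\v$, then $\abs{-\rho}=1$ and $(-\rho)(-\v)=\m$. Next, using the description in Proposition \ref{Wave cone proposition}, that $T$ maps $\Lambda$ onto $\Lambda$: the condition $\abs{\v+(0,\rho/2)}=\abs{\rho}/2$ is unchanged under $\v\mapsto-\v,\ \rho\mapsto-\rho$, while the bilinear conditions $\m\cdot\v^\perp=0$ and $\m\cdot(\v+(0,\rho))=0$ each merely change sign. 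Since $T$ is linear, $T(z_1-z_2)=Tz_1-Tz_2$, so an easy induction on $k$ (using that laminates are nested and that $T$ preserves both $\Lambda$ and membership in $K$) gives $T(K^{k,\Lambda})=K^{k,\Lambda}$, hence $T(K^{lc,\Lambda})=K^{lc,\Lambda}$.

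Now $T$ sends a point $(\rho,\v,\v)$ to $(-\rho,-\v,\v)$, which is of the form $(\tilde\rho,\tilde\v,-\tilde\v)$ with $\tilde\rho=-\rho$, $\tilde\v=-\v$; clearly $\abs{\rho}<1\iff\abs{\tilde\rho}<1$, and substituting $\rho=-\tilde\rho$, $v_2=-\tilde v_2$, $\abs{\v}^2=\abs{\tilde\v}^2$ turns \eqref{Cone condition clumsy} into exactly \eqref{Cone condition clumsy 2}. So, given $(\rho,\v)$ with $\abs{\rho}<1$, $\v\ne\0$ and \eqref{Cone condition clumsy 2}, Proposition \ref{q for first cone} applied to $(-\rho,-\v)$ yields $(-\rho,-\v,-\v)\in K^{lc,\Lambda}$, and applying $T$ gives $(\rho,\v,-\v)\in K^{lc,\Lambda}$. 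The remaining cases $\abs{\rho}=1$ are trivial: $\rho=1$ is incompatible with \eqref{Cone condition clumsy 2}, and if $\rho=-1$ then $(\rho,\v,-\v)=(-1,\v,-\v)\in K$ outright.

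Alternatively, and this is presumably what the authors do, one mirrors the proof of Proposition \ref{q for first cone} directly: write
\[
(\rho,\v,-\v)=\lambda\left(-1,\frac{\v}{\lambda},-\frac{\v}{\lambda}\right)+(1-\lambda)(\psi,\0,\0),
\]
with $\psi=(\rho+\lambda)/(1-\lambda)$ forced by the first component. The first term lies in $K$ and $(\psi,\0,\0)\in X_1\subset K^{lc,\Lambda}$ whenever $\abs{\psi}\le 1$ (take $\e=(0,1)$), so it suffices to pick $\lambda\in(0,1)$ with the difference of the endpoints in $\Lambda$ and $\rho<\psi\le 1$. By Proposition \ref{Wave cone proposition} and Remark \ref{Wave cone remark}, $\Lambda$-membership of $(-1-\psi,\v/\lambda,-\v/\lambda)$ collapses to the single scalar equation $\abs{\v/\lambda}^2+(-1-\psi)(v_2/\lambda)=0$; solving it together with the formula for $\psi$ gives $\lambda=\abs{\v}^2/(\abs{\v}^2+(1+\rho)v_2)$ and $\psi=(\abs{\v}^2+\rho v_2)/v_2$, and one checks $0<\lambda<1$ and $\rho<\psi\le 1$ using that \eqref{Cone condition clumsy 2} forces $v_2>0$ and is equivalent to $\abs{\v}^2+(\rho-1)v_2\le 0$, that is, to $\psi\le 1$.

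I do not expect a genuine obstacle. In the symmetry argument the only point that warrants writing out is the verification that $T$ really preserves the wave cone $\Lambda$. In the direct argument the one delicate point is sign bookkeeping: under \eqref{Cone condition clumsy 2} one has $v_2>0$, the opposite sign to the situation in Proposition \ref{q for first cone}, so the divisions by $v_2$ reverse direction and the equivalence between \eqref{Cone condition clumsy 2} and $\psi\le 1$ must be tracked carefully.
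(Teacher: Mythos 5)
Your proposal is correct, and in fact contains two valid proofs. The ``alternative'' direct argument at the end is precisely the paper's proof: the authors write $(\rho,\v,-\v)=\lambda(-1,\v/\lambda,-\v/\lambda)+(1-\lambda)(\psi,\0,\0)$ with exactly your $\lambda=\abs{\v}^2/(\abs{\v}^2+(1+\rho)v_2)$ and $\psi=(\abs{\v}^2+\rho v_2)/v_2$, and check $0<\lambda<1$ and the equivalence of \eqref{Cone condition clumsy 2} with $\abs{\v}^2-(1-\rho)v_2\le 0$, i.e.\ with $\psi\le 1$; your sign bookkeeping ($v_2>0$ here, versus $v_2<0$ in Proposition \ref{q for first cone}) matches theirs. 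Your primary argument via the linear involution $T(\rho,\v,\m)=(-\rho,-\v,\m)$ is a genuinely different route: the paper never exploits this symmetry and simply declares the proof ``entirely analogous.'' The symmetry argument is arguably cleaner, since once one checks $T(K)=K$ and $T(\Lambda)=\Lambda$ (immediate from Proposition \ref{Wave cone proposition} or Corollary \ref{Wave cone corollary}) and notes that a linear bijection preserving $\Lambda$ commutes with taking laminates, Proposition \ref{q for second cone} follows from Proposition \ref{q for first cone} with no new computation; it also explains structurally why $X_4$ mirrors $X_2$ throughout the paper. Your handling of the endpoint cases $\abs{\rho}=1$ (where Proposition \ref{q for first cone} does not directly apply because it assumes $\abs{\rho}<1$) is correct: $\rho=1$ is incompatible with \eqref{Cone condition clumsy 2} and $\rho=-1$ gives a point of $K$ itself.
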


\begin{proof}
The proof is entirely analogous to that of Proposition \ref{q for first cone}; we write $(\rho,\v,-\v) = \lambda (-1,\v/\lambda,-\v/\lambda) + (1-\lambda) (\psi,\0,\0)$ and set
\[\lambda = \frac{\abs{\v}^2}{\abs{\v}^2 + (1+\rho) v_2}, \qquad \psi = \frac{\abs{\v}^2 + \rho v_2}{v_2}.\]
Now \eqref{Cone condition clumsy 2} is equivalent to $\abs{\v}^2 - (1-\rho) v_2 \le 0$, which in turn is equivalent to $\psi \le 1$. In addition, \eqref{Cone condition clumsy 2} implies $v_2 > 0$, which in turn gives $0 < \lambda < 1$.
\end{proof}

\section{Estimating the hull from above} \label{Estimating the hull from above}
We now intend to show that $K^{lc,\Lambda} \subset \cup_{j=1}^4 X_j$. The steps of the proof are as follows: 

$\bullet$ when $\v = \0$, Corollary \ref{Proposition on m for u = (0,rho)} shows that if $z = (\rho,\0,\m) \in K^\Lambda$, then $z \in X_1$.

$\bullet$ when $(\rho,\v,\m) \in K^\Lambda$ and $\v \neq \0$, Corollary \ref{Corollary on k} shows that $\m = k\v$ for some $k \in \R$. 

$\bullet$ When $z = (\rho,\v,k\v) \in K^\Lambda$ and $|\rho - (1-\rho^2) v_2/\abs{\v}^2| \ge 1$, Corollary \ref{Corollary on Lambda-convex function for first cone} yields $z \in X_2 \cup X_4$.

$\bullet$ When $z = (\rho,\v,k\v) \in K^{lc,\Lambda}$ and $|\rho - (1-\rho^2) v_2/\abs{\v}^2| < 1$, Propositions \ref{Lemma outside the cones}--\ref{Lemma 3 outside the cones} imply that $z \in X_3$. This is the only result that we are not able to show for $K^\Lambda$ but only for $K^{lc,\Lambda}$.

\vspace{0.2cm}
We begin by recalling a proposition from ~\cite{Sze} which also applies to stationary IPM in view of Remark \ref{Remark on hulls}:

\begin{prop} \label{Laszlo's proposition}
The function
\[G_1(\rho,\v,\m) \defeq \abs{\m - \rho \v + \left( 0,\frac{1-\rho^2}{2} \right)} - \frac{1-\rho^2}{2}\]
is $\Lambda$-convex and vanishes in $K$. Consequently,
\[K^\Lambda \subset \left\{ (\rho,\v,\m) \colon \abs{\rho} \le 1, \abs{\m - \rho \v + \left( 0, \frac{1-\rho^2}{2} \right)} \le \frac{1-\rho^2}{2} \right\}.\]
\end{prop}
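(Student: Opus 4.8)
The plan is to verify directly that $G_1$ is $\Lambda$-convex by checking convexity along every wave-cone direction, and then to observe that the stated inclusion is an immediate formal consequence of $\Lambda$-convexity together with $G_1|_K \le 0$ and the definition of $K^\Lambda$.

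First I would record that $G_1 = |F| - (1-\rho^2)/2$ where $F(\rho,\v,\m) \defeq \m - \rho\v + (0,(1-\rho^2)/2)$ is affine in $\m$ and $\v$ and quadratic (with a single $-\rho^2/2$ term) in $\rho$; note $\rho \mapsto -(1-\rho^2)/2 = (\rho^2-1)/2$ is convex. Fix $z_0 = (\rho_0,\v_0,\m_0)$ and a wave-cone direction $z = (\rho,\v,\m) \in \Lambda$, and consider $g(t) \defeq G_1(z_0 + tz)$. The term $-(1-(\rho_0+t\rho)^2)/2$ contributes $t^2\rho^2/2$ plus affine terms, which is convex. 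For the term $|F(z_0+tz)|$ I would expand $F(z_0+tz) = F(z_0) + t\,(DF)z + t^2 \cdot (-\rho^2/2)(0,1)$, where $(DF)z = \m - \rho_0\v - \rho\v_0 + (0,-\rho_0\rho)$. Since a composition of $|\cdot|$ (convex, monotone on rays) with an affine map is convex, the only obstruction to convexity of $t \mapsto |F(z_0+tz)|$ is the quadratic term $-t^2\rho^2(0,1)/2$. The key computation is therefore to show that when $z \in \Lambda$, the quadratic correction exactly cancels against the $t^2\rho^2/2$ coming from $-(1-\rho^2)/2$ — or more precisely, that $g(t)$ can be rewritten as $|$ affine in $t$ with values in $\R^2\,| +$ convex, so that no negative curvature survives.

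Concretely, I would use the explicit description of $\Lambda$ from Proposition~\ref{Wave cone proposition} / Corollary~\ref{Wave cone corollary}. For $z \in \Lambda$ we have $\abs{\v + (0,\rho/2)} = \abs{\rho}/2$, i.e. $\abs{\v}^2 + \rho v_2 = 0$, and $\v = \tfrac{\rho}{2}(\e - (0,1))$, $\m = \ell(\e-(0,1))$ in the generic case $\rho \ne 0$ (the cases $\rho = 0$ being easy, since then $\v = \0$ and $F$ is affine along $z$). Writing everything in terms of $\e \in S^1$ and $\ell$, I expect the vector $F(z_0 + tz)$ to take the form $A + tB + t^2 C$ with $C = -\tfrac{\rho^2}{2}(0,1)$, and the point of the computation is that $|A + tB + t^2 C|$, which is not obviously convex, satisfies $|A+tB+t^2C|^2 = |$ genuinely affine vector$|^2 + (\text{something})$ thanks to $\abs{\v}^2 + \rho v_2 = 0$; equivalently, one shows $\tfrac{d^2}{dt^2}|F(z_0+tz)| \ge -\rho^2$ pointwise, with the deficit absorbed by the $+t^2\rho^2/2$ from the scalar term. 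Either way, the verification that $g'' \ge 0$ reduces to a short algebraic identity using the wave-cone relations. That identity — pinning down exactly why the second-order term is harmless precisely on $\Lambda$ — is the only real obstacle; everything else is bookkeeping.

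Once $\Lambda$-convexity is established, the vanishing $G_1|_K = 0$ is immediate from \eqref{IPM 4}: on $K$ one has $\m = \rho\v$ and $\rho^2 = 1$, so $F = (0,0)$ and $(1-\rho^2)/2 = 0$, hence $G_1 = 0$; in particular $G_1|_K \le 0$. By the definition of the $\Lambda$-convex hull recalled in \textsection\ref{Relevant notions}, every $z \in K^\Lambda$ must satisfy $G_1(z) \le 0$, i.e. $\abs{\m - \rho\v + (0,(1-\rho^2)/2)} \le (1-\rho^2)/2$; in particular the right-hand side is $\ge 0$, forcing $\abs{\rho} \le 1$. This gives exactly the claimed inclusion, completing the proof. (I would also remark that this is essentially Sz\'ekelyhidi's computation from~\cite{Sze} for non-stationary IPM, valid here because $\Lambda \subset \Lambda_{\operatorname{ns}}$ by Remark~\ref{Remark on hulls}, so a $\Lambda_{\operatorname{ns}}$-convex function is a fortiori $\Lambda$-convex.)
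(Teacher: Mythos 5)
Your closing parenthesis is, in fact, the entirety of the paper's proof: Proposition~\ref{Laszlo's proposition} is simply recalled from~\cite{Sze}, where it is established for the non-stationary wave cone $\Lambda_{\operatorname{ns}}$, and applies here because $\Lambda \subset \Lambda_{\operatorname{ns}}$ (Remark~\ref{Remark on hulls}) and a $\Lambda_{\operatorname{ns}}$-convex function is a fortiori $\Lambda$-convex. So that one remark already constitutes a complete proof matching the paper. The deduction of the inclusion from $\Lambda$-convexity, $G_1|_K = 0$ and the definition of $K^\Lambda$ (including the observation that $\abs{\rho} \le 1$ is forced) is also correct.

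The direct verification you present as your main route, however, is not finished, and it contains a computational slip. The quadratic coefficient of $t$ in $F(z_0+tz)$ is not $-\tfrac{\rho^2}{2}(0,1)$: expanding $-(\rho_0+t\rho)(\v_0+t\v)$ produces an additional $-t^2\rho\v$, so $C = -\rho\v - \left(0,\tfrac{\rho^2}{2}\right)$. This is where the wave-cone relation enters: for $z \in \Lambda$ with $\rho \neq 0$ one has $\v = \tfrac{\rho}{2}(\e-(0,1))$ with $\e \in S^1$ (Corollary~\ref{Wave cone corollary}, with $\e=(0,1)$ covering $\v=\0$), whence $C = -\rho\bigl[\v+(0,\rho/2)\bigr] = -\tfrac{\rho^2}{2}\e$ — a vector of norm exactly $\rho^2/2$, but generally not vertical. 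The "key identity" you leave open then closes as follows: writing $\abs{\cdot} = \sup_{\abs{\omega}\le 1}\omega\cdot(\cdot)$,
\[
G_1(z_0+tz) = \sup_{\abs{\omega}\le 1}\Bigl[\omega\cdot\bigl(F(z_0)+tB\bigr) + \tfrac{\rho^2}{2}\bigl(1-\omega\cdot\e\bigr)t^2\Bigr] + \rho_0\rho\, t + \tfrac{\rho_0^2-1}{2},
\]
and since $1-\omega\cdot\e \ge 0$ for $\abs{\omega}\le 1$, each competitor in the supremum is convex in $t$, hence so is the supremum (the case $\rho=0$ forces $\v=\0$ and $C=\0$, so $t\mapsto G_1(z_0+tz)$ is then a composition of $\abs{\cdot}$ with an affine map plus a constant). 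This is preferable to your alternative formulation via $\tfrac{d^2}{dt^2}\abs{F(z_0+tz)} \ge -\rho^2$, which is delicate where $F$ vanishes. As written, your argument stops exactly at the step you yourself identify as the only real obstacle, so it does not yet stand on its own without the appeal to~\cite{Sze}.
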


Propositions \ref{Proposition on first laminate} and \ref{Laszlo's proposition} have the following consequence.

\begin{cor} \label{Proposition on m for u = (0,rho)}
Let $\abs{\rho} \le 1$. Then
\[(\rho, \0, \m) \in K^\Lambda \quad \Longleftrightarrow \quad \m = \frac{1-\rho^2}{2} (\e-(0,1)), \; \abs{\e} \le 1.\]
\end{cor}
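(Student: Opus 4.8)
In effect the corollary asserts that $K^\Lambda \cap \{\v = \0\} = X_1$, and the plan is to prove the two inclusions separately: $\subseteq$ follows directly from Proposition \ref{Laszlo's proposition}, and $\supseteq$ is the inclusion $X_1 \subset K^{lc,\Lambda} \subset K^\Lambda$ already recorded after Proposition \ref{Proposition on first laminate}.

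For the forward direction, suppose $(\rho,\0,\m) \in K^\Lambda$ and specialise the conclusion of Proposition \ref{Laszlo's proposition} to $\v = \0$: this gives $\abs{\rho} \le 1$ together with $\abs{\m + (0,(1-\rho^2)/2)} \le (1-\rho^2)/2$. If $\abs{\rho} < 1$, I would divide by $(1-\rho^2)/2$ and put $\e \defeq 2\m/(1-\rho^2) + (0,1)$, so that $\abs{\e} \le 1$ and $\m = \frac{1-\rho^2}{2}(\e-(0,1))$. If $\abs{\rho} = 1$, the same inequality forces $\m = \0$, and then any $\e$ with $\abs{\e} \le 1$ works since $\frac{1-\rho^2}{2} = 0$.

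For the reverse direction it suffices, as $K^{lc,\Lambda} \subset K^\Lambda$, to know that every triple $(\rho,\0,\frac{1-\rho^2}{2}(\e-(0,1)))$ with $\abs{\rho}\le 1$, $\abs{\e}\le 1$ lies in $K^{lc,\Lambda}$; this is exactly the inclusion $X_1 \subset K^{lc,\Lambda}$ noted above. If one wants to see it directly: Proposition \ref{Proposition on first laminate} already places all such triples with $\abs{\e}=1$ in $K^{1,\Lambda}$, and any $\e$ in the closed unit disk is a convex combination $\mu\e_1 + (1-\mu)\e_2$ of two unit vectors, so the corresponding triple is the same convex combination of two members of $K^{1,\Lambda}$ whose difference has the form $(0,\0,\ast)$, hence lies in $\Lambda$ by Corollary \ref{Wave cone corollary}; thus the triple lies in $K^{2,\Lambda} \subset K^{lc,\Lambda}$. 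I do not expect any genuine obstacle here: the corollary is bookkeeping on top of Propositions \ref{Laszlo's proposition} and \ref{Proposition on first laminate}, the only point deserving a moment's care being that passing from the circle $\abs{\e}=1$ to the full disk $\abs{\e}\le 1$ costs one extra lamination step, which is legitimate because directions of the form $(0,\0,\m)$ lie in the wave cone.
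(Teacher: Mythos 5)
Your proof is correct and follows the same route the paper intends: the forward inclusion is exactly the specialisation of Proposition \ref{Laszlo's proposition} to $\v=\0$, and the reverse inclusion is the lamination argument (circle case from Proposition \ref{Proposition on first laminate}, then one more splitting along directions $(0,\0,\ast)\in\Lambda$ to fill the disk) that the paper invokes when it notes $K^{lc,\Lambda}\supset X_1$ via Corollary \ref{Wave cone corollary}. Your explicit handling of the degenerate case $\abs{\rho}=1$ is a welcome extra detail but changes nothing of substance.
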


We then consider the case $\v \neq \0$. The following result follows immediately from Proposition \ref{Wave cone proposition}.

\begin{prop} \label{Lambda-convex function proposition}
The function
\[G_2(\rho,\v,\m) \defeq \m \cdot \v^\perp\]
is $\Lambda$-affine and vanishes in $K$.
\end{prop}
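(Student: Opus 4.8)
The statement to prove is Proposition \ref{Lambda-convex function proposition}: the function $G_2(\rho,\v,\m) = \m \cdot \v^\perp$ is $\Lambda$-affine and vanishes in $K$.

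The plan is to verify both assertions directly. Vanishing on $K$ is immediate: if $(\rho,\v,\m) \in K$, then $\m = \rho\v$, so $G_2(\rho,\v,\m) = \rho\,\v \cdot \v^\perp = 0$. The substantive claim is $\Lambda$-affinity, i.e. that $t \mapsto G_2(z_0 + tz)$ is an affine function of $t$ for every $z_0 \in \R\times\R^2\times\R^2$ and every $z \in \Lambda$. Since $G_2$ is a quadratic form in $(\v,\m)$, expanding $G_2(z_0 + tz)$ in powers of $t$ produces a constant term, a linear term, and a quadratic term; the quadratic coefficient is exactly $G_2$ evaluated at the direction $z$ alone (using that the quadratic form associated to $G_2$ is $\m \cdot \v^\perp$, which is its own "diagonal"). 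So $\Lambda$-affinity reduces to showing $G_2(z) = \m \cdot \v^\perp = 0$ for every $z = (\rho,\v,\m) \in \Lambda$.

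First I would invoke Proposition \ref{Wave cone proposition}, which already lists $\m \cdot \v^\perp = 0$ as one of the defining conditions of $\Lambda$; this is precisely the vanishing of the quadratic part. Hence $t \mapsto G_2(z_0 + tz)$ has zero second-order coefficient and is therefore affine in $t$, which is the definition of $\Lambda$-affinity. (One could alternatively read this straight off Corollary \ref{Wave cone corollary}: in each of the three explicit forms of $z \in \Lambda$, $\m$ is a scalar multiple of $\v$ or $\v = \0$, so $\m \cdot \v^\perp = 0$.) Combining the two computations gives the proposition.

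There is essentially no obstacle here — the result is flagged in the excerpt as following "immediately from Proposition \ref{Wave cone proposition}." The only point requiring a line of care is the bilinear expansion: one should note that the polarization identity for the quadratic form $Q(\v,\m) = \m\cdot\v^\perp$ gives $Q(\v_0 + t\v,\, \m_0 + t\m) = Q(\v_0,\m_0) + t\big(\m_0\cdot\v^\perp + \m\cdot\v_0^\perp\big) + t^2\,\m\cdot\v^\perp$, so that the $t^2$ term is governed solely by the direction $z$, and this is killed by the wave-cone condition. The $\rho$-component of $z$ plays no role since $G_2$ does not depend on $\rho$.

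\begin{proof}
If $(\rho,\v,\m) \in K$, then $\m = \rho \v$, so $G_2(\rho,\v,\m) = \rho\, \v \cdot \v^\perp = 0$; thus $G_2$ vanishes on $K$.

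For $\Lambda$-affinity, fix $z_0 = (\rho_0,\v_0,\m_0)$ and $z = (\rho,\v,\m) \in \Lambda$. Since $G_2$ does not depend on the first component and is a bilinear expression in the remaining ones, the polarization identity gives, for all $t \in \R$,
\[
G_2(z_0 + tz) = \m_0 \cdot \v_0^\perp + t\left( \m_0 \cdot \v^\perp + \m \cdot \v_0^\perp \right) + t^2\, \m \cdot \v^\perp.
\]
By Proposition \ref{Wave cone proposition}, every $z = (\rho,\v,\m) \in \Lambda$ satisfies $\m \cdot \v^\perp = 0$, so the $t^2$-coefficient vanishes and $t \mapsto G_2(z_0 + tz)$ is affine. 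Hence $G_2$ is $\Lambda$-affine.
\end{proof}
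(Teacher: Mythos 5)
Your proof is correct and follows exactly the route the paper intends: the paper gives no written proof but notes the result ``follows immediately from Proposition \ref{Wave cone proposition}'', which is precisely your observation that the quadratic coefficient of $t \mapsto G_2(z_0+tz)$ equals $\m\cdot\v^\perp$ and vanishes by the wave-cone condition, while $\m=\rho\v$ kills $G_2$ on $K$.
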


\begin{cor} \label{Corollary on k}
If $(\rho, \v, \m) \in K^\Lambda$ with $\v \neq \0$, then $\m = k \v$ for some $k \in \R$.
\end{cor}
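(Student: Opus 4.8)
The plan is to deduce Corollary \ref{Corollary on k} directly from the $\Lambda$-affinity of $G_2(\rho,\v,\m) = \m \cdot \v^\perp$ established in Proposition \ref{Lambda-convex function proposition}. Since $G_2$ is $\Lambda$-affine (hence both $\Lambda$-convex and $\Lambda$-concave) and vanishes on $K$, both $G_2$ and $-G_2$ are $\Lambda$-convex functions that are nonpositive on $K$; by the definition of the $\Lambda$-convex hull, neither can separate a point of $K^\Lambda$ from $K$, so $G_2 \equiv 0$ on $K^\Lambda$. Thus for any $(\rho,\v,\m) \in K^\Lambda$ we have $\m \cdot \v^\perp = 0$.

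The remaining step is purely linear algebra in $\R^2$: if $\v \neq \0$, then $\{\v, \v^\perp\}$ is a basis of $\R^2$, and $\m \cdot \v^\perp = 0$ forces $\m$ to be orthogonal to $\v^\perp$, i.e. $\m$ lies in the line spanned by $\v$. Hence $\m = k\v$ for a unique $k \in \R$, namely $k = (\m \cdot \v)/\abs{\v}^2$.

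I do not anticipate any genuine obstacle here: the content is entirely contained in the previously established $\Lambda$-affinity of $G_2$ together with the non-degeneracy of $\v$. The only point requiring a (trivial) remark is that $\Lambda$-affinity yields vanishing on the whole hull $K^\Lambda$, not merely on $K \cap \Lambda$ or on the first laminate — this is exactly the separation characterization of $K^\Lambda$ recalled in \textsection \ref{Relevant notions}, applied to the pair $\pm G_2$.
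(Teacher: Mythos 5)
Your argument is correct and is exactly the route the paper intends: the corollary is stated as an immediate consequence of Proposition \ref{Lambda-convex function proposition}, with the (implicit) reasoning being precisely that $\pm G_2$ are both $\Lambda$-convex and nonpositive on $K$, hence $G_2$ vanishes on $K^\Lambda$, and then $\m \perp \v^\perp$ with $\v \neq \0$ forces $\m \in \spann\{\v\}$. No gaps.
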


In view of Proposition \ref{Proposition on m for u = (0,rho)} and Corollary \ref{Corollary on k}, the hull $K^{lc,\Lambda}$ is determined by finding the exact range of the parameter $k = k(\rho,\v)$ in $\m = k \v$. Proposition \ref{Laszlo's proposition} implies that $k$ lies between $\rho$ and $\rho - (1-\rho^2) v_2/\abs{\v}^2$, giving the optimal range in the case of non-stationary IPM. However, in the case of \textit{stationary} IPM, the range of $k(\rho,\v)$ is smaller, as stated in Theorem \ref{Main theorem}.

We divide the set of points $(\rho,\v) \in \R \times (\R^2 \setminus \{\0\})$ into the cones described by \eqref{Cone 1}--\eqref{Cone 2} and the complement of their union. We first address the points of the two cones.

\begin{prop} \label{Proposition on Lambda-convex function for first cone}
The functions defined by
\begin{align*}
& G_3(\rho,\v,\m) \defeq -\left[ \v - \m \right] \cdot \left[ \v + (0,1+\rho) \right] + \frac{\abs{\v-\m}^2}{2}, \\
& G_4(\rho,\v,\m) \defeq - \left[ \v + \m \right] \cdot \left[ \v - (0,1-\rho) \right] + \frac{\abs{\v+\m}^2}{2},
\end{align*}
are $\Lambda$-convex and satisfy $G_3|_K = G_4|_K = 0$.
\end{prop}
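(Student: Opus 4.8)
The plan is to verify both claims for each of $G_3$ and $G_4$ separately, exploiting the characterisation of $\Lambda$ in Proposition \ref{Wave cone proposition}. Since the two functions are related by the symmetry $(\rho,\v,\m)\mapsto(-\rho,\v,-\m)$ (which preserves $K$ and $\Lambda$), it suffices to treat $G_3$ in detail and obtain $G_4$ by this substitution. First I would check $G_3|_K=0$: on $K$ we have $\abs{\rho}=1$ and $\m=\rho\v$, so $\v-\m=(1-\rho)\v$, which is $\0$ when $\rho=1$ and $2\v$ when $\rho=-1$; in the first case every term of $G_3$ vanishes, and in the second $\v+(0,1+\rho)=\v$, so $G_3=-2\abs{\v}^2+\tfrac12\cdot4\abs{\v}^2=0$. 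The same computation with signs flipped gives $G_4|_K=0$.

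For $\Lambda$-convexity, fix $z_0=(\rho_0,\v_0,\m_0)$ and a direction $z=(\rho,\v,\m)\in\Lambda$, and consider $t\mapsto G_3(z_0+tz)$. Because $G_3$ is a quadratic polynomial in its arguments, this is a quadratic polynomial in $t$, and $\Lambda$-convexity amounts to showing its leading coefficient — the pure second-order term $Q_3(z)\defeq G_3$ evaluated on the quadratic part, i.e. $Q_3(z) = \m\cdot\v - \abs{\v}^2 - \rho v_2 + \tfrac12\abs{\v-\m}^2$ — is nonnegative for every $z\in\Lambda$. (One reads off the quadratic part by bilinearity: the cross term $-[\v-\m]\cdot[\v+(0,1+\rho)]$ contributes $-\abs{\v}^2+\m\cdot\v-\rho v_2$ at second order since the $(0,1)$-part is order zero, and $\tfrac12\abs{\v-\m}^2$ is already homogeneous of degree two.) Now invoke Proposition \ref{Wave cone proposition}: for $z\in\Lambda$ we have $\abs{\v}^2+\rho v_2=0$ and $\m\cdot\v^\perp=0$. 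The first identity kills the middle two terms, leaving $Q_3(z)=\m\cdot\v+\tfrac12\abs{\v-\m}^2$. It remains to see this is $\ge 0$ on $\Lambda$: expanding, $\m\cdot\v+\tfrac12\abs{\v}^2-\m\cdot\v+\tfrac12\abs{\m}^2=\tfrac12\abs{\v}^2+\tfrac12\abs{\m}^2\ge 0$. So in fact $Q_3(z)=\tfrac12(\abs{\v}^2+\abs{\m}^2)\ge0$ with no further input needed, and the analogous computation yields $Q_4(z)=\tfrac12(\abs{\v}^2+\abs{\m}^2)\ge0$ as well.

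The only genuinely delicate point is bookkeeping: making sure the second-order ("leading coefficient") extraction is done correctly, since $G_3$ mixes a bilinear term in $(\v,\m)$ with a term linear in $\rho$, so one must be careful that $-[\v-\m]\cdot(0,1+\rho)$ contributes $-\rho v_2 + v_2 + \rho m_2 - m_2$ and only $-\rho v_2$ survives to second order. Once that is pinned down the rest is the one-line identity above. I would present it by writing $f(t)=G_3(z_0+tz)$, noting $f''(t)/2 = Q_3(z)$ is independent of $t$ and of $z_0$, computing $Q_3(z)$ explicitly, and then substituting the two relations from Proposition \ref{Wave cone proposition} to conclude $f''\ge0$; the statement for $G_4$ follows either by the same computation or by the sign symmetry. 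No part of this requires the full strength of the wave-cone description — only the two quadratic constraints $\abs{\v}^2+\rho v_2=0$ and $\m\cdot\v^\perp=0$ — which is consistent with the paper's remark that these quadratic $\Lambda$-affine functions essentially determine $\Lambda$.
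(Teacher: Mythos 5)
Your overall strategy coincides with the paper's: since $G_3$ is a quadratic polynomial, $t\mapsto G_3(z_0+tz)$ is quadratic in $t$ with leading coefficient equal to the homogeneous quadratic part of $G_3$ evaluated at $z$, and one checks nonnegativity of that coefficient on $\Lambda$; your verification of $G_3|_K=G_4|_K=0$ is also correct. However, the quadratic-part extraction — exactly the step you flag as the delicate bookkeeping — is wrong. The term $-[\v-\m]\cdot(0,1+\rho)$ contributes $-\rho v_2+\rho m_2$ at second order, not only $-\rho v_2$: the product $\rho m_2$ is quadratic in the coordinates of $z$ and does not drop out. The correct quadratic part is
\[
Q_3(z)=-\abs{\v}^2-\rho v_2+\m\cdot\v+\rho m_2+\frac{\abs{\v-\m}^2}{2},
\]
and on $\Lambda$ the first pair of terms vanishes by $\abs{\v}^2+\rho v_2=0$ while the second pair vanishes by the \emph{third} wave-cone relation $\m\cdot(\v+(0,\rho))=0$ — precisely the relation you claim not to need. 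The leading coefficient is therefore $\abs{\v-\m}^2/2$ (as in the paper), not $(\abs{\v}^2+\abs{\m}^2)/2$; the two differ by $\v\cdot\m=-\rho m_2$, which is generically nonzero on $\Lambda$. Since $\abs{\v-\m}^2/2\ge 0$ in any case, the conclusion of $\Lambda$-convexity survives, but your stated value of $f''$ and your closing claim about which wave-cone constraints are used are both incorrect.

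A second, smaller defect: the substitution $(\rho,\v,\m)\mapsto(-\rho,\v,-\m)$ preserves $K$ but not $\Lambda$ (it sends $\abs{\v}^2+\rho v_2$ to $\abs{\v}^2-\rho v_2$), and it carries $G_3$ to $-[\v+\m]\cdot[\v+(0,1-\rho)]+\abs{\v+\m}^2/2$, which is not $G_4$. The symmetry that works is $(\rho,\v,\m)\mapsto(-\rho,-\v,\m)$: it preserves both $K$ and $\Lambda$ and satisfies $G_3(-\rho,-\v,\m)=G_4(\rho,\v,\m)$. Alternatively, repeating the (corrected) computation for $G_4$ gives the leading coefficient $\abs{\v+\m}^2/2$ on $\Lambda$.
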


\begin{proof}
We prove the claims for $G_3$; the proofs for $G_4$ are analogous. Let us fix $z_0 = (\rho_0,\v_0,\m_0) \in \R \times \R^2 \times \R^2$, $z = (\rho,\v,\m) \in \Lambda$ and $t \in \R$. Then
\begin{align*}
G_3(z_0 + t z)
&= -[\v_0 - \m_0 + t (\v-\m)] \cdot [\v_0 + (0, 1+\rho_0) + t (\v + (0,\rho))] \\
&+ \frac{\abs{\v_0 - \m_0 + t (\v-\m)}^2}{2} \\
&= G_3(z_0) + C_{z_0,z} t + \frac{\abs{\v-\m}^2}{2} t^2
\end{align*}
in view of Proposition \ref{Wave cone proposition}. Furthermore, $G_3(1,\v,\v) = 0$ and $G_3(-1,\v,-\v) = 0$ for all $\v \in \R^2$ so that $G_3|_K = 0$.
\end{proof}

\begin{cor} \label{Corollary on Lambda-convex function for first cone}
Suppose $\abs{\rho} < 1$, $\v \neq \0$ and $(\rho,\v,\m) \in K^\Lambda$. If $|\rho - (1-\rho^2)v_2/\abs{\v}^2| \ge 1$, then $z \in X_2 \cup X_4$.
\end{cor}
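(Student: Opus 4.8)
The plan is to eliminate the vector unknown first and then to extract the exact range of the remaining scalar parameter. Since $\v\neq\0$ and $(\rho,\v,\m)\in K^\Lambda$, Corollary \ref{Corollary on k} gives $\m = k\v$ for a unique $k\in\R$, so it suffices to show that $1\le k\le k_\ast$ when $k_\ast\ge 1$, and that $k_\ast\le k\le -1$ when $k_\ast\le -1$, where $k_\ast := \rho - (1-\rho^2)v_2/\abs{\v}^2$.

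First I would use $G_1$ from Proposition \ref{Laszlo's proposition}: being $\Lambda$-convex and vanishing on $K$, it is nonpositive on $K^\Lambda$, and substituting $\m = k\v$ and squaring turns $G_1\le 0$ into $(k-\rho)\bigl[(k-\rho)\abs{\v}^2 + (1-\rho^2)v_2\bigr]\le 0$. Since $\abs{\v}^2>0$, this says precisely that $k$ lies in the closed interval with endpoints $\rho$ and $k_\ast$; in particular $k\le k_\ast$ when $k_\ast\ge 1$ (then $k_\ast>\rho$ because $\abs{\rho}<1$) and $k\ge k_\ast$ when $k_\ast\le -1$.

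Next I would treat the two cases of the hypothesis $\abs{k_\ast}\ge1$ symmetrically. If $k_\ast\ge 1$, then, as already observed in Proposition \ref{q for first cone}, this is equivalent to $\abs{\v}^2 + (1+\rho)v_2\le 0$. I would evaluate $G_3$ from Proposition \ref{Proposition on Lambda-convex function for first cone} along $\m = k\v$, obtaining $G_3(\rho,\v,k\v) = (1-k)\bigl[(1-k)\abs{\v}^2/2 - (\abs{\v}^2 + (1+\rho)v_2)\bigr]$, a quadratic in $k$ with positive leading coefficient. Nonpositivity of $G_3$ on $K^\Lambda$ then forces $1-k$ to lie between $0$ and $2(\abs{\v}^2 + (1+\rho)v_2)/\abs{\v}^2\le 0$, hence $k\ge 1$; combined with $1\le k\le k_\ast$ this gives $z\in X_2$. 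If instead $k_\ast\le -1$, which by Proposition \ref{q for second cone} is equivalent to $\abs{\v}^2 - (1-\rho)v_2\le 0$, the same manipulation with $G_4$ in place of $G_3$ gives $G_4(\rho,\v,k\v) = (1+k)\bigl[(1+k)\abs{\v}^2/2 - (\abs{\v}^2 - (1-\rho)v_2)\bigr]$, and $G_4\le 0$ on $K^\Lambda$ forces $1+k$ between $0$ and $2(\abs{\v}^2 - (1-\rho)v_2)/\abs{\v}^2\le 0$, i.e. $k\le -1$; with $k_\ast\le k\le -1$ this gives $z\in X_4$.

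There is no serious obstacle here: all the relevant $\Lambda$-convex functions have already been produced in Propositions \ref{Laszlo's proposition} and \ref{Proposition on Lambda-convex function for first cone}, and the only care needed is bookkeeping — matching the cone condition $\abs{k_\ast}\ge 1$ with the sign conditions on $\abs{\v}^2\pm(1\pm\rho)v_2$ recorded in Propositions \ref{q for first cone}--\ref{q for second cone}, and reading off the correct interval for $k$ from the nonpositivity of an upward parabola. I would also point out that the whole argument stays inside $K^\Lambda$, so the conclusion holds for the $\Lambda$-convex hull and not merely for $K^{lc,\Lambda}$, in contrast with the complementary region $\abs{k_\ast}<1$ handled later in Propositions \ref{Lemma outside the cones}--\ref{Lemma 3 outside the cones}.
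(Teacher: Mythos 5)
Your argument is correct and follows essentially the same route as the paper: reduce to $\m=k\v$ via Corollary \ref{Corollary on k}, get $k$ between $\rho$ and $k_\ast$ from $G_1$, and get $k\ge 1$ (resp.\ $k\le -1$) from the nonpositivity of the upward parabola $G_3(\rho,\v,k\v)$ (resp.\ $G_4$) under the cone condition. The only difference is cosmetic bookkeeping (the paper multiplies $G_3$ by $1-\rho>0$ before reading off the sign, and treats the $X_4$ case as ``analogous'' rather than writing it out).
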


\begin{proof}
Assume $\rho - (1-\rho^2) v_2/\abs{\v}^2 \ge 1$; the proof of the case $\rho - (1-\rho^2) v_2/\abs{\v}^2 \le -1$ is analogous. By Corollary \ref{Corollary on k}, $\m = k \v$ for some $k \in \R$. Our aim is to show that $(\rho,\v,k\v) \in X_2$, i.e., $1 \le k \le \rho - (1-\rho^2) v_2/\abs{\v}^2$.

The inequality $k \le \rho - (1-\rho^2) v_2/\abs{\v}^2$ follows from Proposition \ref{Laszlo's proposition}. For the claim $k \ge 1$ note that $\rho - (1-\rho^2)v_2/\abs{\v}^2 \ge 1$ can be written as $(1-\rho) \abs{\v}^2 + (1-\rho^2) v_2 \le 0$. We compute
\begin{align*}
0
&\ge (1-\rho) G_3(\rho,\v,\m) \\
&= (k-1) \v \cdot \left( (1-\rho) \v + (0,1-\rho^2) + \frac{(1-\rho) (k-1) \v}{2} \right) \\
&= (k-1) \left( (1-\rho) \abs{\v}^2 + (1-\rho^2) v_2 + \frac{(1-\rho) (k-1) \abs{\v}^2}{2} \right),
\end{align*}
which implies the claim.
\end{proof}

Corollaries \ref{Proposition on m for u = (0,rho)} and \ref{Corollary on Lambda-convex function for first cone} show that
\begin{equation} \label{Easier part of hull}
K^\Lambda \setminus \left\{ (\rho,\v,\m) \colon \v \neq \0, \, -1 < \rho - \frac{(1-\rho^2) v_2}{\abs{\v}^2} < 1 \right\} = X_1 \cup X_2 \cup X_4.
\end{equation}
In other words, we have computed the exact range of the $\m$ component in all cases except $\v \neq \0$, $\rho - (1-\rho^2) v_2/\abs{\v}^2 \in (-1,1)$. We finish the proof of Theorem \ref{Main theorem} by showing that
\begin{equation} \label{Harder part of hull}
K^{lc,\Lambda} \cap \left\{ (\rho,\v,\m) \colon \v \neq \0, \, -1 < \rho - \frac{(1-\rho^2) v_2}{\abs{\v}^2} < 1 \right\} = X_3;
\end{equation}
combining \eqref{Easier part of hull} and \eqref{Harder part of hull} yields $K^{lc,\Lambda} = \cup_{j=1}^4 X_j$.

The proof of \eqref{Harder part of hull} consists of two parts. First, Proposition \ref{Lemma outside the cones} says that $X_3^{1,\Lambda} = X_3$. Then, if $z = (\rho,\v,k\v) \in (\cup_{j=1}^4 X_j)^{1,\Lambda}$, where $\v \neq \0$ and $-1 < \rho - (1-\rho^2) v_2/\abs{\v}^2 < 1$, we write $z$ as a $\Lambda$-convex combination of $z_1 \in X_i$ and $z_2 \in X_j$, where $i,j \in \{1,2,3,4\}$. We show in Propositions \ref{Lemma 2 outside the cones}--\ref{Lemma 3 outside the cones} that we cannot have $i \neq j$. Now, since each $X_i$ is lamination convex, we get $i = j = 3$, so that $z \in X_3$, as claimed.

\begin{prop} \label{Lemma outside the cones}
$X_3^{1,\Lambda} = X_3$. 
\end{prop}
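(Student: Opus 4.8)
The goal is to show that taking one lamination step from $X_3$ produces nothing new, i.e. $X_3^{1,\Lambda} = X_3$. The inclusion $X_3 \subset X_3^{1,\Lambda}$ is automatic, so the content is the reverse inclusion: if $z_1, z_2 \in X_3$ with $z_1 - z_2 \in \Lambda$ and $z = \lambda z_1 + (1-\lambda) z_2$ for some $\lambda \in (0,1)$, then $z \in X_3$. Write $z_i = (\rho_i, \v_i, k_i \v_i)$ with $\abs{\rho_i} < 1$, $\v_i \neq \0$, and $k_i = \rho_i - (1-\rho_i^2) (v_i)_2 / \abs{\v_i}^2 \in (-1,1)$; by the definition of $X_3$ this last identity says precisely that $G_1(z_i) = 0$, where $G_1$ is the $\Lambda$-convex function of Proposition \ref{Laszlo's proposition} (indeed $\m_i - \rho_i \v_i = (k_i - \rho_i)\v_i = -\frac{(1-\rho_i^2)(v_i)_2}{\abs{\v_i}^2}\v_i$, whose $\frac{1-\rho_i^2}{2}$-ball-centering condition is an equality exactly when $G_1 = 0$). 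The plan is to extract from the hypotheses enough rigidity to force $\v_1 \parallel \v_2$ and then to reduce the whole configuration to a one-dimensional picture.

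First I would use the wave-cone structure. Since $z := z_1 - z_2 \in \Lambda$ and $z$ has a nonzero $\v$-component in the generic case, Corollary \ref{Wave cone corollary} (its first form) shows that $\v_1 - \v_2$ is a scalar multiple of $\e - (0,1)$ for some $\e \in S^1$, and that the $\m$-component $k_1 \v_1 - k_2 \v_2$ is also a scalar multiple of the same vector $\e - (0,1)$; moreover $\rho_1 \neq \rho_2$ and $\v_1 - \v_2 = \frac{\rho_1 - \rho_2}{2}(\e - (0,1))$. Thus both $\v_1 - \v_2$ and $k_1\v_1 - k_2\v_2$ lie on a common line $L$ through the origin. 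Combining $G_2(z_1) = G_2(z_2) = 0$ (Proposition \ref{Lambda-convex function proposition}, $\m_i \cdot \v_i^\perp = 0$) with $G_2$ being $\Lambda$-affine and vanishing on the difference direction, I get that $\m \cdot \v^\perp = 0$ for the difference as well, but the real leverage is: $\v_1 - \v_2 \in L$ together with $k_1\v_1 - k_2\v_2 \in L$ forces, after eliminating, that $(k_1 - k_2)\v_1$ and $(k_1 - k_2)\v_2$ differ by an element of $L$; if $k_1 \neq k_2$ this already pins $\v_1, \v_2$ to lie in an affine line parallel to $L$, and combined with the cone-form constraint $\v_i + (0,\rho_i/2)$-type relations I expect to conclude $\v_1 \parallel \v_2$. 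The degenerate cases — $\v_1 - \v_2 = \0$ (so $\rho_1 = \rho_2$, second or third form of $\Lambda$) and $\v_1 \parallel \v_2$ directly — should be handled separately and are easier: when $\v_1 = \v_2 =: \v$, the difference is $(\rho_1 - \rho_2, \0, (k_1 - k_2)\v)$, which by Corollary \ref{Wave cone corollary} must have the form $(\rho, \0, (m_1, 0))$, forcing $(k_1 - k_2)\v$ horizontal, and one then checks directly that the convex combination still satisfies the $X_3$ equality.

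Once $\v_1 \parallel \v_2$, write $\v_i = t_i \v$ for a fixed unit-ish $\v$ and scalars $t_i$ (nonzero, possibly of differing sign). Then $z = \lambda z_1 + (1-\lambda) z_2$ has $\v$-component $(\lambda t_1 + (1-\lambda)t_2)\v$ and $\m$-component $(\lambda k_1 t_1 + (1-\lambda)k_2 t_2)\v$, so $z$ is automatically of the form $(\rho, \tilde t \v, \tilde k \tilde t \v)$ with $\rho = \lambda\rho_1 + (1-\lambda)\rho_2$; provided $\tilde t \neq 0$ this is a point with $\v \neq \0$ and $\m$ parallel to $\v$. It remains to verify the two defining relations of $X_3$: the equality $\tilde k = \rho - (1-\rho^2)\tilde v_2/\abs{\tilde\v}^2$, equivalently $G_1(z) = 0$, and the strict bound $\tilde k \in (-1,1)$. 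For the equality I would argue that $G_1 \geq 0$ everywhere (Proposition \ref{Laszlo's proposition}), $G_1(z_1) = G_1(z_2) = 0$, and $G_1$ is convex along the segment $[z_2, z_1] \subset z_2 + \R(z_1 - z_2)$ because $z_1 - z_2 \in \Lambda$; a nonnegative convex function vanishing at both endpoints of a segment vanishes on the whole segment, hence $G_1(z) = 0$. For the strict bound, once $G_1(z) = 0$ holds and $\abs{\rho} < 1$, $\v \neq \0$, the identity $\tilde k = \rho - (1-\rho^2)\tilde v_2/\abs{\tilde\v}^2$ forces $\tilde k \in [-1,1]$; strictness should follow because equality $\tilde k = \pm 1$ would, via the cone descriptions \eqref{Cone 1}--\eqref{Cone 2}, put $z$ on the boundary shared with $X_2$ or $X_4$, and I would rule this out using that $z_1, z_2$ already satisfy strict inequalities and a convex combination of points strictly inside cannot land exactly on the extreme face unless the difference direction is tangent — which is incompatible with $z_1 \neq z_2$ and $z_1 - z_2 \in \Lambda$. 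I would also need to dispose of the possibility $\tilde t = 0$, i.e. $\lambda t_1 = -(1-\lambda)t_2$: here $z$ has zero velocity, so it would have to lie in $X_1$, and I would show this forces $z \notin X_3$'s open region or else contradicts the strict cone bounds on $z_1, z_2$.

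The main obstacle is the first part: forcing $\v_1 \parallel \v_2$ from the wave-cone membership of $z_1 - z_2$ together with the $X_3$-defining equalities. The wave cone allows the difference to be either "generic" (first form of Corollary \ref{Wave cone corollary}, with a constrained velocity direction) or one of two degenerate forms, and in the generic case the constraint couples $\rho_1 - \rho_2$, the direction $\e$, and the scalar $\ell$ in a way that must be reconciled with the two \emph{quadratic} constraints $G_1(z_i) = 0$ and the linear-fractional formula for $k_i$; carrying this elimination through cleanly, without missing a degenerate branch, is the delicate bookkeeping. I expect the cleanest route is to parametrize everything by $\e$ and the $\rho_i$, observe that $G_3, G_4$ from Proposition \ref{Proposition on Lambda-convex function for first cone} are also available and vanish on $K$ hence are nonnegative on $K^\Lambda \supset X_3$, and use their vanishing or sign to squeeze the configuration — but I anticipate that the genuinely short argument is the convexity-of-$G_1$-along-$\Lambda$-segments observation above, applied after one first shows the segment stays in the region $\{\v \neq \0\}$, so that most of the work is really in the parallelism step and the boundary-exclusion step.
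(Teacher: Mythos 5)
Your overall architecture (reduce to a single $\Lambda$-segment, force the velocities to be collinear, then compute) matches the paper's, but two of your load-bearing steps do not work as stated. First, the mechanism you propose for the equality $G_1(z)=0$ at interior points of the segment is based on a sign error: Proposition \ref{Laszlo's proposition} gives $G_1\le 0$ on $K^\Lambda$, not $G_1\ge 0$ (indeed $G_1<0$ at, say, $(\rho,\0,\0+\tfrac{1-\rho^2}{2}(\0-(0,1)))\in X_1$), and $\Lambda$-convexity of $G_1$ along the segment yields $G_1(z)\le \lambda G_1(z_1)+\mu G_1(z_2)=0$ --- an upper bound you already have, with no force pushing $G_1(z)$ back up to $0$. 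This is not a repairable formality: in non-stationary IPM the same $K$ and the same $G_1$ admit $\Lambda$-segments between points of $\{G_1=0\}$ that dip strictly into $\{G_1<0\}$ (this is exactly why that hull has nonempty interior), so the conclusion must come from the smaller stationary wave cone and cannot follow from $G_1$ alone. Even granting $G_1(z)=0$, the conditions $G_1=0$ and $\m=k\v$ have \emph{two} roots, $k=\rho$ and $k=\rho-(1-\rho^2)v_2/\abs{\v}^2$, so a further argument would be needed to select the correct branch. The paper instead, once collinearity is known, writes $\v_i=(1+c_i)\v$ with $c_i$ determined by the wave cone, substitutes the exact $X_3$-formulas for $k_1,k_2$, and verifies the identity for $k$ by direct computation.

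Second, the collinearity step --- which you correctly identify as the main obstacle --- is left as a plan. Your own reduction shows that $k_1\neq k_2$ forces $\v_1,\v_2\in\operatorname{span}(\e-(0,1))$, but you never prove $k_1\neq k_2$, and when $k_1=k_2$ the statement can genuinely fail without further input: e.g.\ $\v_1=\v_2=\v$ with $v_2\neq 0$ and $\rho_1+\rho_2=-\abs{\v}^2/v_2$ gives two points with $k_1=k_2$ whose convex combinations have the \emph{wrong} $k$, so this configuration must be excluded rather than "checked directly" as you suggest for the degenerate branch. Excluding it is precisely where the defining strict inequality $\abs{\v}^2+\rho v_2>\abs{v_2}$ of the rigid region must enter. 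The paper does this by showing that $k(\rho,\v)=\rho-(1-\rho^2)v_2/\abs{\v}^2$ is \emph{strictly monotone} along every admissible $\Lambda$-direction $(1,\tfrac12(\e-(0,1)),\cdot)$: a first-order Taylor expansion plus minimization over $\e\in S^1$ produces the factor $[\abs{\v}^2+(\rho-1)v_2][\abs{\v}^2+(\rho+1)v_2]/\abs{\v}^2>0$. Some quantitative argument of this type is indispensable; without it neither the collinearity nor the exclusion of the $k_1=k_2$ branch closes.
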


\begin{proof}
Suppose $z_1, z_2 \in X_3$ satisfy $\0 \neq z_1-z_2 \in \Lambda$. We already mention that by Propositions \ref{Lemma 2 outside the cones}--\ref{Lemma 4 outside the cones} below, for every $(\rho,\v,\m) \in [z_1,z_2]$  we have $-1 < \rho - (1-\rho^2) v_2/\abs{\v}^2 < 1$.

Let $0 < \lambda < 1$ and $\lambda + \mu = 1$. We write
\begin{align*}
z
&=\lambda z_1 + \mu z_2 \\
&= \lambda \left( \rho + \mu t, \v + \mu \w, k_1 (\v + \mu \w) \right) + \mu \left( \rho - \lambda t, \v - \lambda \w, k_2 (\v - \lambda \w) \right) \\
&= \left( \rho, \v, k \v \right)
\end{align*}
and wish to show that $k = \rho - (1-\rho^2) v_2/\abs{\v}^2$.
We write
\begin{equation} \label{Difference of points in Y}
z_1 - z_2 = \left( t, \w, (k_1-k_2) \v + (\mu k_1 + \lambda k_2) \w \right) \in \Lambda.
\end{equation}
Corollary \ref{Wave cone corollary} and the assumption $z_1-z_2 \neq \0$ imply that $t \neq 0$. Assume, without loss of generality, that $t > 0$.

We first note that if $\w = \0$, then Corollary \ref{Wave cone corollary} yields $(k_1-k_2) v_2 = 0$. First, in the case $v_2 = 0$, then the assumption $z_1,z_2 \in X_3$ yields $k_1 = \rho + \mu t$ and $k_2 = \rho - \lambda t$, so that $k = \rho$ and $z \in X_3$.

We then treat the rest of the cases. Suppose, therefore, that either $\w \neq \0$ or $k_1-k_2 = \abs{\w} = 0$. In each case, by \eqref{Difference of points in Y} and Corollary \ref{Wave cone corollary}, we may write
\[z_1-z_2 = \left( t, \frac{t}{2} (\e-(0,1)), \ell (\e-(0,1)) \right)\]
for some $\e \in S^1$ and $\ell \in \R$.

We intend show that $k_1 < k_2$. (In particular, this rules out the case $k_1-k_2=\abs{\w}=0$.) This reduces to showing a claim that we next specify. Suppose
\[\zeta_1 = \left( \rho, \v, \left[ \rho - \frac{(1-\rho^2) v_2}{\abs{\v}^2} \right] \v \right) \eqdef (\rho, \v, \ell_1 \v) \in X_3,\]
that is,
\begin{equation} \label{Outside cones}
\abs{\v}^2 + \rho v_2 - \abs{v_2} > 0.
\end{equation}
Suppose $\epsilon > 0$ is small and $\zeta_2 = (\rho + \epsilon,\v + \epsilon (\e-(0,1))/2,\ell_2 [\v + \epsilon (\e-(0,1))/2]) \in X_3$, that is,
\begin{align*}
\zeta_2
&= \left( \rho + \epsilon, \v + \frac{\epsilon}{2} [\e-(0,1)], \right. \\
& \left. \left[ \rho+\epsilon - \frac{[1-(\rho+\epsilon)^2] \left( v_2 + \frac{\epsilon}{2} (e_2-1) \right)}{\abs{\v + \frac{\epsilon}{2} [\e-(0,1)]}^2} \right] \left( \v + \frac{\epsilon}{2} [\e-(0,1)] \right) \right),
\end{align*}
where $\abs{\e} = 1$. We claim that $\ell_1 < \ell_2$.

We write $\ell_2 - \ell_1$ as a Taylor series:
\begin{align*}
\ell_2-\ell_1
&= \rho+\epsilon - \frac{(1-\rho^2) v_2 + \epsilon [(1-\rho^2) (e_2-1)/2 - 2 \rho v_2] + O(\epsilon^2)}{\abs{\v}^2 + \epsilon \v \cdot [\e-(0,1)] + O(\epsilon^2)} \\
&- \rho + \frac{(1-\rho^2) v_2}{\abs{\v}^2} \\
&= \epsilon \left[ 1 - \frac{(1-\rho^2) (e_2-1)/2 - 2 \rho v_2}{\abs{\v}^2} + \frac{(1-\rho^2) v_2 \v \cdot [\e-(0,1)]}{\abs{\v}^4} \right] + O(\epsilon^2) \\
&= \frac{\epsilon}{\abs{\v}^2} \left[ \abs{\v}^2 + 2 \rho v_2 + \frac{(1-\rho^2) v_1 v_2 e_1}{\abs{\v}^2} + \frac{(1-\rho^2) (v_2^2-v_1^2) (e_2-1)/2}{\abs{\v}^2} \right] \\
&+ O(\epsilon^2) \\
&= \frac{\epsilon}{\abs{\v}^2} \left[ \abs{\v}^2 + 2 \rho v_2 + \frac{1-\rho^2}{2} \left( \frac{2 v_1 v_2}{\abs{\v}^2}, \frac{v_2^2-v_1^2}{\abs{\v}^2} \right) \cdot [\e-(0,1)] \right] + O(\epsilon^2).
\end{align*}
Thus it suffices to show that
\[H(\tilde{\e}) \defeq \abs{\v}^2 + 2 \rho v_2 + \frac{1-\rho^2}{2} \left( \frac{2 v_1 v_2}{\abs{\v}^2}, \frac{v_2^2-v_1^2}{\abs{\v}^2} \right) \cdot [\tilde{\e}-(0,1)] > 0 \quad \text{for all } \tilde{\e} \in S^1.\]
Note that $H$ is minimised when $\tilde{\e} \cdot (2 v_1 v_2/\abs{\v}^2,[v_2^2-v_1^2]/\abs{\v}^2)$ is minimised, that is, when $\tilde{\e} = - (2v_1v_2/\abs{\v}^2,[v_2^2-v_1^2]/\abs{\v}^2)$. The minimum value
\begin{align*}
H \left( \frac{-2v_1v_2}{\abs{\v}^2},\frac{v_1^2-v_2^2}{\abs{\v}^2} \right)
&= \abs{\v}^2 + 2 \rho v_2 - \frac{1-\rho^2}{2} - \frac{1-\rho^2}{2} \frac{v_2^2-v_1^2}{\abs{\v}^2} \\
&= \abs{\v}^2 + 2 \rho v_2 - (1-\rho^2) \frac{v_2^2}{\abs{\v}^2} \\
&= \frac{[\abs{\v}^2 + (\rho-1) v_2] [\abs{\v}^2 + (\rho+1) v_2]}{\abs{\v}^2} \\
&> 0
\end{align*}
by \eqref{Outside cones}. Thus $\ell_1 < \ell_2$. We conclude that $k_1 < k_2$ in \eqref{Difference of points in Y}, and so $\w \neq \0$.

\vspace{0.2cm}

Recall that $\w = t (\e-(0,1))/2$ for some $\e \in S^1$; since $\w \neq \0$, we have $\e \neq (0,1)$. Since we already showed that $k_1 < k_2$, we conclude from \eqref{Difference of points in Y} that $\v \cdot \w^\perp = 0$.

Proposition \ref{Lemma 4 outside the cones} below implies that $\v \neq \0$. Thus $\w = \ell \v$, where $\abs{\w + (0,t/2)} = \abs{t/2}$ gives $\ell = -t v_2/\abs{\v}^2$, so that we can write
\begin{align*}
z_1 &= \left( \rho + \mu t, \left( 1 - \mu \frac{t v_2}{\abs{\v}^2} \right) \v, k_1 \left( 1 - \mu \frac{t v_2}{\abs{\v}^2} \right) \v \right), \\
z_2 &= \left( \rho - \lambda t, \left( 1 + \lambda \frac{t v_2}{\abs{\v}^2} \right) \v, k_2 \left( 1 + \lambda \frac{t v_2}{\abs{\v}^2} \right) \v \right).
\end{align*}
Since $z_1, z_2 \in X_3$, we have
\begin{align*}
k_1
&= \rho + \mu t - \frac{[1-(\rho + \mu t)^2] v_2}{\left( 1 - \mu \frac{t v_2}{\abs{\v}^2} \right) \abs{\v}^2}, \\
k_2
&= \rho - \lambda t - \frac{[1-(\rho - \lambda t)^2] v_2}{\left( 1 + \lambda \frac{t v_2}{\abs{\v}^2} \right) \abs{\v}^2}
\end{align*}
so that
\begin{align*}
k
&= \lambda \left( 1 - \mu \frac{t v_2}{\abs{\v}^2} \right) k_1 + \mu \left( 1 + \lambda \frac{t v_2}{\abs{\v}^2} \right) k_2 \\
&= \rho - \lambda \mu \frac{t^2 v_2}{\abs{\v}^2} - \frac{v_2}{\abs{\v}^2} [\lambda [1-(\rho + \mu t)^2] + \mu [1- (\rho - \lambda t)^2]] \\
&= \rho - \frac{(1-\rho^2) v_2}{\abs{\v}^2},
\end{align*}
as claimed.
\end{proof}

\begin{prop} \label{Lemma 2 outside the cones}
$[X_3 - (X_2 \cup X_4)] \cap \Lambda = \emptyset$.
\end{prop}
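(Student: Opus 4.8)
We must show that no element of $\Lambda$ can be written as $z_1 - z_2$ with $z_1 \in X_3$ and $z_2 \in X_2 \cup X_4$. The plan is to suppose such a difference exists and exploit the very specific structure of all three sets: $X_2, X_3, X_4$ all live in the "$\m = k\v$" slice (except for the $\v = \0$ boundary of $X_2, X_4$), and the membership conditions are sharply distinguished by the sign of the quantity $Q(\rho,\v) \defeq \rho - (1-\rho^2) v_2/\abs{\v}^2$ relative to $k$: in $X_3$ one has $k = Q \in (-1,1)$, in $X_2$ one has $1 \le k \le Q$, and in $X_4$ one has $Q \le k \le -1$. So first I would write $z_1 = (\rho_1, \v_1, k_1 \v_1) \in X_3$ and take $z_2 \in X_2$ (the $X_4$ case being symmetric, via the reflection $\rho \mapsto -\rho$, $v_2 \mapsto -v_2$, $\m \mapsto -\m$, which swaps the two cones and the sets $X_2 \leftrightarrow X_4$).

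Next I would run through the three forms of $\Lambda$-directions given by Corollary~\ref{Wave cone corollary}. If $z_1 - z_2$ has the third form $(0, \0, \m)$, then $\v_1 = \v_2$ and $\rho_1 = \rho_2$, hence $Q(\rho_1,\v_1) = Q(\rho_2,\v_2)$; but then $z_1 \in X_3$ forces $k_1 = Q < 1$ while $z_2 \in X_2$ forces $k_2 \ge 1$, and since $\v_1 = \v_2 \neq \0$ (as $z_1 \in X_3$), the third component $\m_1 - \m_2 = (k_1 - k_2)\v_1$ must be of the form $(m_1, 0)$, i.e. parallel to $\e_1$; combined with $k_1 \neq k_2$ this forces $\v_1 \parallel \e_1$, and then checking the wave-cone relation $\m \cdot \v^\perp = 0$ on $z_1 - z_2$ is automatic, but the relation $\abs{\v + (0,\rho/2)} = \abs\rho/2$ reads $0 = 0$ since $\rho = 0$, $\v = \0$ — contradiction with $\v_1 \neq \0$. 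More cleanly: $(0,\0,\m)$ requires $t = 0$ and $\w = \v_1 - \v_2 = \0$, so $\v_1 = \v_2$ and $\rho_1 = \rho_2$, giving the $k$-contradiction directly. For the second form $(\rho, \0, (m_1,0))$ with $\rho \neq 0$, again $\v_1 = \v_2$, so $z_2 \in X_2$ with $\v_2 \neq \0$ and the slice structure gives $Q(\rho_1,\v_1) = k_1$, $Q(\rho_2,\v_2) \ge k_2 \ge 1$; subtract and use $\rho_1 \neq \rho_2$ together with the explicit form of $Q$ to derive a contradiction, handling the sign of $v_2$ (note $z_2 \in X_2$ forces $v_2 < 0$ unless $z_2$ is on the $\v = \0$ locus, excluded here since $\v_2 = \v_1 \neq \0$). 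For the first, generic form, $z_1 - z_2 = (t, \tfrac t2(\e - (0,1)), \ell(\e-(0,1)))$ with $t \neq 0$; then $\v_2 = \v_1 - \tfrac t2(\e-(0,1))$ and the third components must satisfy $k_1\v_1 - k_2\v_2 = \ell(\e-(0,1))$, i.e. $\ell(\e - (0,1))$ is a combination of $\v_1$ and $\v_2$ — I would solve for $\ell$ and $k_2$ and then substitute into the constraint that $z_2 \in X_2$, i.e. $1 \le k_2 \le Q(\rho_2, \v_2)$, deriving a contradiction with $z_1 \in X_3$, i.e. $k_1 = Q(\rho_1,\v_1) \in (-1,1)$, much as in the computation at the end of the proof of Proposition~\ref{Lemma outside the cones}.

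The main obstacle is the last, generic case: there the two points need not share a common direction for $\v$, so one cannot immediately reduce to a one-parameter family, and the bookkeeping of the linear-algebraic relation $k_1\v_1 - k_2\v_2 = \ell(\e-(0,1))$ together with the three wave-cone identities and the two (in)equality constraints defining $X_2$ and $X_3$ is delicate. I expect the cleanest route is to use the $\Lambda$-affine function $G_2 = \m \cdot \v^\perp$ and the $\Lambda$-convex functions $G_1, G_3, G_4$ of Propositions~\ref{Laszlo's proposition} and~\ref{Proposition on Lambda-convex function for first cone}: since $z_1, z_2 \in K^{lc,\Lambda} \subset K^\Lambda$ and $z_1 - z_2 \in \Lambda$, the map $\tau \mapsto G_i(z_2 + \tau(z_1 - z_2))$ is convex, vanishes at the endpoints of $K$-laminate structure, and the sign information at $z_1$ (where $z_1 \in X_3$ means $G_3(z_1) < 0$ or $> 0$ strictly, by~\eqref{Outside cones}) versus at $z_2 \in X_2$ (where $G_3(z_2) = 0$ by the boundary characterization $k_2 = 1$ or $k_2 = Q$) should be incompatible with convexity along the segment. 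Making precise which of $G_3, G_4$ to use in each sub-case, and checking the strictness, is where the real work lies; once the generic case is done, the two degenerate cases follow by the direct substitutions above, and the $X_4$ case follows by symmetry.
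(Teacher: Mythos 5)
Your overall plan --- a case analysis over the three forms of $\Lambda$-directions in Corollary \ref{Wave cone corollary}, playing the admissible range of $k$ against $Q(\rho,\v) \defeq \rho - (1-\rho^2)v_2/\abs{\v}^2$ --- is essentially the paper's approach, and your treatment of the form $(0,\0,\m)$ is correct. The genuine gap is the case you yourself flag as the main obstacle and leave open. The idea you are missing is that this case is not ``generic'' at all: writing $z_1 = (\rho,\v,k\v) \in X_3$ and $z_2 = (\psi,\w,\ell\w) \in X_2 \cup X_4$, the wave-cone condition $\m\cdot\v^\perp = 0$ of Proposition \ref{Wave cone proposition}, applied to the difference, reads
\[
\left[ k(\v-\w) + (k-\ell)\w \right]\cdot(\v-\w)^\perp = (k-\ell)\,\w\cdot\v^\perp = 0,
\]
and $k \in (-1,1)$, $\abs{\ell}\ge 1$ force $k \ne \ell$, hence $\w\cdot\v^\perp = 0$. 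So $\v$ and $\w$ are collinear from the outset, $\v = (1+t)\w$; the power balance $\abs{\v-\w}^2 + (\rho-\psi)(v_2-w_2) = 0$ then pins down $t = (\psi-\rho)w_2/\abs{\w}^2$, and $k = Q(\rho,\v)$ becomes an explicit rational expression in $(\rho,\psi,\w)$ whose incompatibility with $1 \le \ell$ (resp.\ $\ell\le-1$) is a short sign check. Without this collinearity step, your plan to ``solve for $\ell$ and $k_2$ and substitute'' is a hope rather than an argument, and your fallback via convexity of $G_3,G_4$ along the segment is likewise not carried out (and it is doubtful it could isolate $X_3$, which is cut out by an equality, not by the vanishing of $G_3$ or $G_4$).

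There is also a step that fails outright. In your ``second form'' case you have $\v_1=\v_2\eqdef\v$ and $(k_1-k_2)\v = (m_1,0)$ with $k_1\ne k_2$, which forces $v_2=0$; you then claim a contradiction can be derived by ``handling the sign of $v_2$''. But $z_2\in X_2$ with $v_2=0$ does not force $v_2<0$: it forces $\psi=1$ and $\ell=1$, exactly the corner you mention and dismiss, and there no contradiction is available. Indeed $z_1=(0,(1,0),\0)\in X_3$ and $z_2=(1,(1,0),(1,0))\in X_2$ satisfy $z_1-z_2=(-1,\0,(-1,0))\in\Lambda$ by Corollary \ref{Wave cone corollary}. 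So the statement, read literally, admits this degenerate configuration; the segment it generates stays inside $X_2\cup X_3$, so Theorem \ref{Main theorem} is unaffected, and the paper's own proof brushes past the same point (its claim that $t=0$ forces $\rho=\psi$ is not justified when $v_2=0$). A correct write-up must isolate this sub-case explicitly rather than assert a contradiction that cannot be produced.
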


\begin{proof}
Suppose
\begin{align*}
z_1 &= \left( \rho, \v, \left[ \rho - \frac{(1-\rho^2) v_2}{\abs{\v}^2} \right] \v \right) \eqdef (\rho,\v,k\v) \in X_3, \\
z_2 &= \left( \psi, \w, \ell \w \right) \in X_2 \cup X_4,
\end{align*}
so that $\v,\w \neq \0$. Seeking a contradiction, assume that
\[z_1-z_2 = (\rho-\psi, \v-\w, k(\v-\w) + (k-\ell) \w) \in \Lambda.\]
By the definitions of $X_2$ and $X_4$, we get $k \neq \ell$, so that Proposition \ref{Wave cone proposition} gives $\w \cdot \v^\perp = 0$. Now $\v = (1+t) \w$ for some $t \in \R \setminus \{-1,0\}$; if we had $t = 0$, then $z_1-z_2 \in \Lambda$ would imply $\rho = \psi$, in contradiction with the definitions of $X_2$, $X_3$ and $X_4$.

Now, since $z_1-z_2 \in \Lambda$, we have
\[0 = \abs{\v-\w}^2 + (\rho-\psi) (v_2-w_2) = t^2 \abs{\w}^2 + t (\rho-\psi) w_2\]
so that $\v = (1+t) \w = [1 + (\psi-\rho) w_2/\abs{\w}^2] \w$ and $\rho \neq \psi$. We therefore obtain
\begin{equation} \label{Formula for k}
k = \rho - \frac{(1-\rho^2) v_2}{\abs{\v}^2}
= \rho - \frac{(1-\rho^2) w_2}{\abs{\w}^2 + (\psi-\rho) w_2}.
\end{equation}
We divide the rest of the proof into separate cases.

Suppose first $z_2 \in X_2$ (that is, $\abs{\w}^2 + (1+\psi) w_2 \le 0$) and $1 + t > 0$ (i.e. $\abs{\w}^2 + (\psi-\rho) w_2 > 0$). By \eqref{Formula for k}, the assumption $k < 1$ can be written as $\abs{\w}^2 + (1+\psi) w_2 > 0$, which gives a contradiction.

Suppose next $\abs{\w}^2 + (1+\psi) w_2 \le 0$ and $\abs{\w}^2 + (\psi-\rho) w_2 < 0$. Thus $w_2 < 0$. Now $k > -1$ can be written as $\abs{\w}^2 + (\psi-1) w_2 < 0$, yielding a contradiction.

Similarly, if $z_2 \in X_4$ (i.e. $\abs{\w}^2 + (1+\psi) w_2 \le 0$) and $1 + t > 0$, then $k < 1$ is in contradiction with the assumption $z_2 \in X_4$. Finally, if $z_2 \in X_4$ and $1 + t < 0$, then $k > -1$ contradicts 
$z_2 \in X_4$.
\end{proof}

\begin{prop} \label{Lemma 4 outside the cones}
Suppose $z_1 \in X_1$, $z_2 \in X_2 \cup X_3 \cup X_4$ and, $z_2 - z_1 \in \Lambda$. Then the half-open interval $(z_1,z_2] \subset X_2 \cup X_4$.
\end{prop}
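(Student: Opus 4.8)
The plan is to analyze the structure of the wave-cone direction $z_2-z_1$ using the explicit description in Corollary \ref{Wave cone corollary}, exploiting that $z_1 \in X_1$ has zero velocity component. Write $z_1 = (\rho_1,\0,\m_1)$ with $\m_1 = \frac{1-\rho_1^2}{2}(\e_1-(0,1))$, $|\e_1|\le 1$, and $z_2 = (\rho_2,\v_2,\m_2)$ with $\v_2\neq\0$ (which holds since $z_2\in X_2\cup X_3\cup X_4$), so $\m_2 = k_2\v_2$ for the appropriate $k_2$. The difference $z_2-z_1 = (\rho_2-\rho_1,\v_2,k_2\v_2-\m_1)$ must lie in $\Lambda$. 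Since its velocity component $\v_2$ is nonzero, Corollary \ref{Wave cone corollary} forces it into the first family: $\v_2 = \frac{\rho_2-\rho_1}{2}(\e-(0,1))$ and $k_2\v_2-\m_1 = \ell(\e-(0,1))$ for some $\e\in S^1\setminus\{(0,1)\}$ and $\ell\in\R$; in particular $\rho_2\neq\rho_1$. First I would record the two consequences: (i) $\v_2$ is parallel to $\e-(0,1)$, hence $|\v_2|^2 + (\rho_2-\rho_1)(v_2)_2 = 0$ by the wave-cone power-balance relation (Remark \ref{Wave cone remark}), i.e. $(v_2)_2 = -|\v_2|^2/(\rho_2-\rho_1)$; and (ii) $\m_1 = k_2\v_2 - \ell(\e-(0,1))$ is also parallel to $\v_2$, so both $\m_1$ and $\m_2$ are scalar multiples of $\v_2$.

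Next I would parametrize the half-open segment: for $s\in(0,1]$ put $z(s) = z_1 + s(z_2-z_1) = (\rho(s),\v(s),\m(s))$ with $\rho(s)=\rho_1+s(\rho_2-\rho_1)$, $\v(s)=s\v_2$, and $\m(s)=\m_1 + s(k_2\v_2-\m_1)$. Since $z(s)\in K^{lc,\Lambda}$ (being a $\Lambda$-convex combination of hull points) and $\v(s)\neq\0$, Corollary \ref{Corollary on k} gives $\m(s)=k(s)\v(s)$ for some scalar; consistency with the affine form of $\m(s)$ in $s$ determines $k(s)$ explicitly. The key computation is then to evaluate the power balance $|\v(s)|^2 + \rho(s)(v(s))_2 - |(v(s))_2|$ along the segment and show it is $<0$ for every $s\in(0,1]$, which by \eqref{Cone 1}--\eqref{Cone 2} (equivalently, the dichotomy $X_3$ versus the cones $X_2\cup X_4$) is exactly the statement that $z(s)\notin X_3$, forcing $z(s)\in X_2\cup X_4$. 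Using $(v(s))_2 = s(v_2)_2 = -s|\v_2|^2/(\rho_2-\rho_1)$ and $|\v(s)|^2=s^2|\v_2|^2$, the quantity $|\v(s)|^2+\rho(s)(v(s))_2$ becomes, after substituting $\rho(s)=\rho_1+s(\rho_2-\rho_1)$, a clean expression of the form $s|\v_2|^2\big(s - \rho(s)/(\rho_2-\rho_1)\big) = s|\v_2|^2 \cdot \frac{-\rho_1}{\rho_2-\rho_1}$; comparing this to $|(v(s))_2| = s|\v_2|^2/|\rho_2-\rho_1|$ reduces the strict inequality to an elementary statement about $\rho_1$, $\rho_2$ with $|\rho_1|\le 1$, $|\rho_2|\le 1$, $\rho_1\neq\rho_2$.

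I would finish by confirming that this elementary statement indeed holds — the sign analysis splits according to whether $\rho_2>\rho_1$ or $\rho_2<\rho_1$, and in each branch the inequality follows from $|\rho_1|\le 1$ (strictly on the interior, with the boundary case $|\rho_1|=1$ giving $\v_2=\0$, a contradiction, or $\m_1=\0$ handled directly). Once $z(s)\notin X_3$ is established, since $K^{lc,\Lambda}=\cup_j X_j$ and $z(s)\notin X_1$ (its velocity is nonzero), we conclude $z(s)\in X_2\cup X_4$ for all $s\in(0,1]$, which is the claim. The main obstacle I anticipate is bookkeeping the degenerate subcases — $|\rho_1|=1$ (where $\m_1$ may force $\v_2=\0$), the endpoint behavior at $s\to 0^+$, and ensuring the formula for $k(s)$ is consistent with membership in $X_2$ versus $X_4$ rather than just "not $X_3$"; but the heart of the argument is the one-line reduction of the power balance along the segment to a scalar inequality in $\rho_1,\rho_2$.
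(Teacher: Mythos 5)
Your argument is correct and follows essentially the same route as the paper: both exploit the wave-cone power balance $\abs{\v(s)}^2+(\rho(s)-\rho_1)(v(s))_2=0$ along the segment (the paper applies it directly to $z(s)-z_1\in\Lambda$ rather than via the explicit parametrization), which reduces the cone condition at $z(s)$ to the elementary inequality $\abs{\rho_1}\le 1$, and then one invokes Corollary \ref{Corollary on Lambda-convex function for first cone} to place $z(s)$ in $X_2\cup X_4$. Two small touch-ups: the last step should cite \eqref{Easier part of hull} or Corollary \ref{Corollary on Lambda-convex function for first cone} rather than the not-yet-established identity $K^{lc,\Lambda}=\cup_j X_j$, and the case $\abs{\rho_1}=1$ only forces $\m_1=\0$ (not $\v_2=\0$), which is harmless because the resulting non-strict inequality already excludes membership in $X_3$.
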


\begin{proof}
Suppose $z_1 = (\rho,\0,\m) \in X_1$ and $z_2 \in X_2 \cup X_3 \cup X_4$ satisfy $z_2-z_1 \in \Lambda$. Let $z = (\rho+\epsilon,\v,\tilde{\m}) \in (z_1,z_2]$; thus
\[(z-z_1) = (\epsilon,\v,\tilde{\m} - \m) \in \Lambda.\]
Also note that $z_2 \in X_2 \cup X_3 \cup X_4$ implies that $\v \neq \0$.

If $\epsilon = 0$, we get $z-z_1 = (0,\v,\tilde{\m}-\m) \in \Lambda$, which contradicts Corollary \ref{Wave cone corollary}. We then assume that $0 < \epsilon \le 1-\rho$. By Proposition \ref{Wave cone proposition}, $\abs{\v}^2 + \epsilon v_2 = 0$. Since $\v \neq \0$, we conclude that $v_2 < 0$. Thus
\[\abs{\v}^2 + (\rho + \epsilon + 1) v_2 = (\rho+1) v_2 \le 0\]
which, combined with Corollary \ref{Corollary on Lambda-convex function for first cone}, yields $z \in X_2$. Similarly, if $-1-\rho \le \epsilon < 0$, then $z_2 \in X_4$.
\end{proof}

We finish the proof of Theorem \ref{Main theorem} by showing that a $\Lambda$-segment between $z_1 \in X_2$ and $z_2 \in X_4$ cannot contain $(\rho,\v,k\v)$ with $\v \neq \0$ and $-1 < \rho - (1-\rho^2) v_2/\abs{\v}^2 < 1$.

\begin{prop} \label{Lemma 3 outside the cones}
$(X_2 \cup X_4)^{1,\Lambda} \subset X_1 \cup X_2 \cup X_4$.  
\end{prop}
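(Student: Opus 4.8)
The plan is to reduce the entire statement to the power balance. Write $B \defeq \abs{\v}^2 + \rho v_2$ and record the two cone functions from \eqref{Cone 1}--\eqref{Cone 2} as
\[P \defeq \abs{\v}^2 + (1+\rho) v_2 = B + v_2, \qquad Q \defeq \abs{\v}^2 + (\rho-1) v_2 = B - v_2,\]
so that $X_2 \subset \{P \le 0\}$, $X_4 \subset \{Q \le 0\}$, and the region excised in \eqref{Easier part of hull} is exactly $\{\v \neq \0, \, P > 0, \, Q > 0\}$. Since $X_2 \cup X_4 \subset K^{lc,\Lambda}$ (Section \ref{Estimating the hull from below}) and $K^{lc,\Lambda}$ is lamination convex, every first-laminate point $z_\lambda = \lambda z_1 + (1-\lambda) z_2$ (with $z_1, z_2 \in X_2 \cup X_4$ and $z_1 - z_2 \in \Lambda$) lies in $K^{lc,\Lambda} \subset K^\Lambda$; by \eqref{Easier part of hull} it therefore suffices to prove that the segment $[z_1,z_2]$ never enters this region, i.e. that at each $\lambda$ either $\v_\lambda = \0$ or $\min(P,Q) \le 0$. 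The engine of the proof is that $B$ is $\Lambda$-affine, since its quadratic part $\abs{\v}^2 + \rho v_2$ vanishes on $\Lambda$ by Remark \ref{Wave cone remark}; as $v_2$ is linear, both $P$ and $Q$ restrict to \emph{affine} functions of $\lambda$ along any $\Lambda$-segment.

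The two pure cases are then immediate. If $z_1, z_2 \in X_2$, then $P \le 0$ at both endpoints, so affineness forces $P(\lambda) \le 0$ on all of $[0,1]$; symmetrically, $z_1, z_2 \in X_4$ gives $Q \le 0$ throughout. In either case the segment stays in one cone (or meets $\v_\lambda = \0$) and so avoids the excised region.

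The mixed case $z_1 \in X_2$, $z_2 \in X_4$ is the crux and the main obstacle: here I would only know $P \le 0$ at $\lambda = 1$ and $Q \le 0$ at $\lambda = 0$, and since $\min(P,Q)$ is concave these two endpoint inequalities do not by themselves prevent $\min(P,Q)$ from being positive in the interior. To defeat this I would use the remaining wave-cone conditions. Writing $z_i = (\rho_i,\v_i,k_i\v_i)$, $t \defeq \rho_1 - \rho_2$ and $\w \defeq \v_1 - \v_2$, the $\m$-component of $z_1 - z_2$ is $k_1 \v_1 - k_2 \v_2$ with $k_1 \ge 1 > -1 \ge k_2$; Proposition \ref{Wave cone proposition} gives $(k_1 \v_1 - k_2 \v_2) \cdot \w^\perp = 0$, and expanding this using the antisymmetry of $(\cdot) \cdot (\cdot)^\perp$ leaves $(k_2 - k_1)\, \v_1 \cdot \v_2^\perp = 0$, whence $\v_1 \parallel \v_2$ because $k_1 \neq k_2$. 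The degenerate alternative $\w = \0$ is ruled out: the remaining wave-cone condition then forces either $\rho_1 = \rho_2$, in which case $z_1, z_2$ share a common $(\rho,\v)$ on which the $X_2$ and $X_4$ constraints are incompatible, or $v_2 = 0$, which gives $P = \abs{\v}^2 > 0$ and contradicts $z_1 \in X_2$. Thus $\v_1, \v_2, \w$ are all parallel; writing $\v_2 = s\w$ and $\v_1 = (s+1)\w$ gives $\v_\lambda = (s+\lambda)\w$, while $\abs{\w}^2 + t w_2 = 0$ fixes $t = -\abs{\w}^2/w_2$ (note $w_2 \neq 0$, as otherwise $\w = \0$).

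Substituting these relations is the heart of the matter: the apparent quadratics in $\lambda$ collapse to the factored forms
\begin{align*}
P(\lambda) &= (s+\lambda)\bigl[ s\abs{\w}^2 + (\rho_2+1) w_2 \bigr] \eqdef (s+\lambda) C_P, \\
Q(\lambda) &= (s+\lambda)\bigl[ s\abs{\w}^2 + (\rho_2-1) w_2 \bigr] \eqdef (s+\lambda) C_Q,
\end{align*}
so that $P$ and $Q$ share the factor $s + \lambda$, which is precisely the scalar appearing in $\v_\lambda = (s+\lambda)\w$ and which vanishes exactly where $\v_\lambda = \0$. It then remains only to read off signs from the endpoint data $P(1) = (s+1) C_P \le 0$ and $Q(0) = s C_Q \le 0$ (with $s \neq 0, -1$ since $\v_1,\v_2 \neq \0$): if $s > 0$ then $C_Q \le 0$ and $Q \le 0$ on $[0,1]$; if $s < -1$ then $C_P \ge 0$ and $P \le 0$ on $[0,1]$; and if $-1 < s < 0$ then $C_P \le 0$ and $C_Q \ge 0$, so $P \le 0$ where $s + \lambda \ge 0$ and $Q \le 0$ where $s + \lambda \le 0$, the two halves meeting at $\lambda = -s$, where $\v_\lambda = \0$. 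In every regime the segment avoids the excised region, so together with the pure cases and \eqref{Easier part of hull} we conclude $(X_2 \cup X_4)^{1,\Lambda} \subset X_1 \cup X_2 \cup X_4$. The last regime also explains the appearance of $X_1$ in the conclusion: a $\Lambda$-segment running from $X_2$ to $X_4$ must pass through the point with $\v = \0$.
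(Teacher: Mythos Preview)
Your proof is correct and takes a genuinely different, more streamlined route than the paper's. The key new ingredient is your observation that the power balance $B=\abs{\v}^2+\rho v_2$ is $\Lambda$-affine (Remark \ref{Wave cone remark}), so both cone functions $P=B+v_2$ and $Q=B-v_2$ restrict to affine functions along any $\Lambda$-segment. Together with the reduction via \eqref{Easier part of hull}, this immediately dispatches the pure cases $z_1,z_2\in X_2$ (resp.\ $X_4$), which the paper does not treat in the proof of this proposition but instead absorbs into the surrounding claim that each $X_i$ is lamination convex. In the mixed case you and the paper both extract parallelism of the velocities from the wave-cone identity $\m\cdot\v^\perp=0$ and $k_1\neq k_2$, but from there the arguments diverge: the paper locates a first exit point from the $X_4$-cone, shows it must satisfy $\tilde\w=\0$ (otherwise a sign contradiction with $v_2<0<\tilde w_2$), and then invokes Proposition \ref{Lemma 4 outside the cones}; you instead parametrise $\v_\lambda=(s+\lambda)\w$ and obtain the factored forms $P(\lambda)=(s+\lambda)C_P$, $Q(\lambda)=(s+\lambda)C_Q$, so the sign analysis becomes a three-case check on $s$. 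Your approach is self-contained (no appeal to Proposition \ref{Lemma 4 outside the cones}) and makes the appearance of $X_1$ in the conclusion completely explicit as the point $\lambda=-s$; the paper's approach is more geometric but leans on auxiliary results.
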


\begin{proof}
Suppose
\[z_1 = (\rho,\v,k\v) \in X_2, \qquad
z_2 = \left( \psi, \w, \ell \w \right) \in X_4\]
and
\[z_1-z_2 = (\rho-\psi, \v-\w, k(\v-\w) + (k-\ell) \w) \in \Lambda.\]
Thus
\[1 \le k \le \rho - \frac{(1-\rho^2) v_2}{\abs{\v}^2}, \qquad \psi - \frac{(1-\psi^2) w_2}{\abs{\w}^2} \le \ell \le -1,\]
giving $\abs{\v}^2 + (\rho+1) v_2 \le 0$ and $\abs{\w}^2 + (\psi-1) w_2 \le 0$, which in turn yields $v_2 < 0 < w_2$. Now $\rho \neq \psi$, as otherwise $z_1-z_2 \in \Lambda$ would give $\v-\w = \0$, contradicting $v_2 < 0 < w_2$.

Choose the unique $\tilde{\psi} = \lambda \psi + \mu \rho \in [\psi,\rho]$ (where $0 \le \lambda \le 1$ and $\lambda + \mu = 1$) such that $\tilde{z} = (\tilde{\psi}, \tilde{\w}, \tilde{\m}) \defeq \lambda z_1 + \mu z_2$ satisfies $\tilde{\w} = \0$ or
\begin{equation} \label{Boundary of cone}
\tilde{\psi} - \frac{(1-\tilde{\psi}^2) \tilde{w}_2}{|\tilde{\w}|^2} = -1.
\end{equation}
If $\tilde{\w} = \0$, then $\tilde{z} \in X_1$ and we are reduced to the situation of Proposition \ref{Lemma 4 outside the cones}. Assume, therefore, $\tilde{\w} \neq \0$ and \eqref{Boundary of cone} holds. Consequently, $|\tilde{\w}|^2 + (\tilde{\psi}-1) \tilde{w}_2 = 0$, giving $\tilde{w}_2 > 0$. Note that \eqref{Boundary of cone} and Corollary \ref{Corollary on Lambda-convex function for first cone} give $\tilde{z} = (\tilde{\psi}, \tilde{\w}, -\tilde{\w})$.

Now, by assumption,
\[z_1 - \tilde{z} = (\rho-\tilde{\psi},\v-\tilde{\w}, k(\v-\tilde{\w}) + (k+1) \tilde{\w}) \in \Lambda,\]
so that $\tilde{\w} \cdot \v^\perp = 0$ since $k \ge 1$. Let us write $\v = (1+t) \tilde{\w}$; now $z_1-z_2 \in \Lambda$ gives $t = (\tilde{\psi}-\rho) \tilde{\w}_2/|\tilde{\w}|^2$. On the other hand, $v_2 < 0 < \tilde{w}_2$ and $\v = (1+t) \tilde{\w}$ yield $1+t < 0$, so that
\[0 > \frac{|\tilde{\w}|^2 + (\tilde{\psi}-\rho) \tilde{\psi}_2}{|\tilde{\w}|^2} = \frac{(1-\rho) \tilde{w}_2}{|\tilde{\w}|^2},\]
giving a contradiction with $\tilde{w}_2 > 0$.
\end{proof}

This finishes the proof of Theorem \ref{Main theorem} and gives the exact description of the lamination convex hull of the stationary IPM equations. Furthermore, outside the 'rigid region' of $K^{lc,\Lambda}$ where $(\rho,\v) \in \R \times (\R^2 \setminus \{\0\})$ with $\abs{\v}^2 + \rho v_2 > \abs{v_2}$ we get the same description for the $\Lambda$-convex hull. If we could get this result for all $(\rho,\v) \in \R \times \R^2$, we could formulate Theorem \ref{Second main theorem} for the $\Lambda$-convex hull instead of the lamination convex hull.

\section{Non-existence of non-trivial subsolutions in bounded domains} \label{Non-existence of non-trivial solutions in bounded domains}
As observed in ~\cite{Elgindi} (although stated under different hypotheses), if $\v \in L^2_\sigma(\Omega,\R^2)$ and $\rho \in L^\infty(\Omega)$ form a solution of stationary IPM, then
\begin{equation} \label{Elgindi's computation}
\int_\Omega \abs{\v}^2 = \int_\Omega \v \cdot [-\nabla p - (0,\rho)] = - \int_\Omega \rho v_2 = - \int_\Omega \rho \v \cdot \nabla y = 0.
\end{equation}
We adapt the proof to subsolutions with values in $K^{lc,\Lambda}$ by using the exact form of $K^{lc,\Lambda}$ computed in Theorem \ref{Main theorem}.

\begin{proof}[Proof of Theorem \ref{Second main theorem}]
Since $\m \in L^2_\sigma(\Omega,\R^2) = [\nabla W^{1,2}(\Omega)]^\perp$ and $(\rho,\v,\m)(x) \in K^{lc,\Lambda}$ a.e. $x \in \Omega$, we may write
\begin{equation} \label{Vanishing integral with k}
0 = \int_{\Omega} \m \cdot \nabla y = \int_{\v = \0} \frac{1-\rho^2}{2} (e_2-1) + \sum_{j=2}^4 \int_{(\rho,\v) \in X_j} k v_2.
\end{equation}
If $(\rho,\v) \in X_2$, then $1 \le k \le \rho - (1-\rho^2) v_2/\abs{\v}^2$ so that either $\rho = 1$ or $v_2 < 0$. In both cases, $k v_2 \le v_2$. Thus
\begin{equation} \label{Inequality for first cone}
\int_{(\rho,\v) \in X_2} k v_2 \le \int_{(\rho,\v) \in X_2} v_2.
\end{equation}
Similarly, if $(\rho,\v) \in X_4$, then $\rho - (1-\rho^2) v_2/\abs{\v}^2 \le k \le -1$ so that either $\rho = -1$ or $v_2 > 0$, giving $k v_2 \le -v_2$ and
\begin{equation} \label{Inequality for second cone}
\int_{(\rho,\v) \in X_4} k v_2 \le -\int_{(\rho,\v) \in X_4} v_2.
\end{equation}
Furthermore, since $(\rho,\v,\m)(x) \in K^{lc,\Lambda}$ a.e. $x \in \Omega$, we get
\[\int_{(\rho,\v) \in X_3} k v_2 = \int_{(\rho,\v) \in X_3} \rho v_2 - \int_{(\rho,\v) \in X_3} \frac{(1-\rho^2) v_2^2}{\abs{\v}^2}.\]
Using \eqref{Vanishing integral with k}--\eqref{Inequality for second cone},
\begin{align*}
-\int_{(\rho,\v) \in X_3} \rho v_2
&= - \int_{(\rho,\v) \in X_3} k v_2 - \int_{(\rho,\v) \in X_3} \frac{(1-\rho^2) v_2^2}{\abs{v}^2} \\
&= \int_{\v = \0} \frac{1-\rho^2}{2} (e_2-1) + \int_{(\rho,\v) \in X_2 \cup X_4} k v_2 - \int_{X_3} \frac{(1-\rho^2) v_2^2}{\abs{v}^2} \\
&\le \int_{(\rho,\v) \in X_2} v_2 - \int_{(\rho,\v) \in X_4} v_2 - \int_{(\rho,\v) \in X_3} \frac{(1-\rho^2) v_2^2}{\abs{\v}^2},
\end{align*}
and so, using the assumption that $\v \in L^2_\sigma(\Omega,\R^2)$,
\begin{align*}
0
&\le \int_{\Omega} \abs{\v}^2 = \int_\Omega \v \cdot [-\nabla p - (0,\rho)] = - \int_{\Omega} \rho v_2 = - \sum_{j=2}^4 \int_{(\rho,\v) \in X_j} \rho v_2 \\
&\le \int_{(\rho,\v) \in X_2} v_2 - \int_{(\rho,\v) \in X_4} v_2 - \int_{(\rho,\v) \in X_3} \frac{(1-\rho^2) v_2^2}{\abs{\v}^2} \\
&- \int_{(\rho,\v) \in X_2} \rho v_2 - \int_{(\rho,\v) \in X_4} \rho v_2 \\
&= \int_{(\rho,\v) \in X_2} (1-\rho) v_2 - \int_{(\rho,\v) \in X_4} (1+\rho) v_2 - \int_{(\rho,\v) \in X_3} \frac{(1-\rho^2) v_2^2}{\abs{\v}^2} \\
&\le 0,
\end{align*}
where in the last inequality we have used $v_2\leq 0$ in $X_2$ and $v_2 \geq 0$ in $X_4$. We thus conclude that $\v = \0$. Now \eqref{IPM 3} gives $\partial_x \rho = \nabla^\perp \cdot (0,\rho) = 0$.
\end{proof}

\begin{rem} \label{Dichotomy remark}
The proof of Theorem \ref{Second main theorem} also works essentially verbatim with impermeable walls in the vertical direction and periodic boundary conditions in the horizontal direction. Thus, the dichotomy on directions of strips that we mentioned in the introduction extends to subsolutions with values in $K^{lc,\Lambda}$.

Adapting \eqref{Elgindi's computation} to a strip with finite width in the direction $(0,1)$, we briefly indicate the role that the direction $(0,1)$ plays. The second equality in \eqref{Elgindi's computation} uses the boundary conditions that $\v \cdot \nu|_{\partial \Omega} = 0$ when $y = 0$ and $\v$ is periodic in $x$; this part works equally in the setting of ~\cite{CLV}. However, the fourth equality in \eqref{Elgindi's computation} uses the fact that $(x,y) \mapsto y$ is periodic in $x$. It is here that the adaptation to all other strips breaks down, and thus there is no geometric obstruction to the solutions of ~\cite{CLV}. In the proof of Theorem \ref{Second main theorem}, the fourth equality of \eqref{Elgindi's computation} is necessarily replaced by a weaker condition, and the proof requires the precise computation of $K^{lc,\Lambda}$ in Theorem \ref{Main theorem}.
\end{rem}

\section{Relation to the infinite time limit of non-stationary IPM} \label{Relation to the limit of non-stationary IPM}
As the last topic of this paper, we show that Theorem \ref{Second main theorem} reflects the behaviour of subsolutions of non-stationary IPM at the limit $t \to \infty$. The proof is a straightforward application of ~\cite[Corollary 1.2]{Elgindi} which states that $\partial_t \int_\Omega \rho x_2 \d x = 2^{-1} \partial_t \int_\Omega \abs{\rho-x_2}^2 \d x = - \int_\Omega |\v|^2 \d x$ for smooth solutions of non-stationary IPM.

\begin{prop} \label{Third main theorem}
Suppose $\rho \in L^\infty(0,\infty;L^\infty)$ and $\v,\m \in L^\infty(0,\infty;L^2_\sigma)$ form a subsolution of non-stationary IPM in a smooth, bounded, simply connected domain $\Omega \subset \R^2$. Then $\v \in L^2(0,\infty;L^2_\sigma)$.
\end{prop}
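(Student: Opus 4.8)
The plan is to replace Elgindi's exact energy identity by a one-sided dissipation inequality valid for subsolutions, which bounds $\int_0^T\int_\Omega\abs{\v}^2$ uniformly in $T$. Recall (see Remark~\ref{Remark on hulls} and ~\cite{Sze}) that a subsolution of non-stationary IPM satisfies $\partial_t\rho + \nabla\cdot\m = 0$, $\nabla\cdot\v = 0$ and $\nabla^\perp\cdot(\v + (0,\rho)) = 0$ together with $(\rho,\v,\m)(\cdot,t) \in K^{\Lambda_{\operatorname{ns}}} = \{\,\abs{\rho}\le 1,\ \abs{\m - \rho\v + (0,(1-\rho^2)/2)} \le (1-\rho^2)/2\,\}$; in particular $\norm{\rho}_{L^\infty}\le 1$. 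Set $E(t) \defeq \int_\Omega \rho(x,t)\,x_2\d x$. Since $\partial_t\rho = -\nabla\cdot\m$ with $\m \in L^\infty(0,\infty;L^2_\sigma)$, the coordinate function $x_2$ is an admissible test function in the distributional mass equation — it need not vanish on $\partial\Omega$ because $\m\cdot\n|_{\partial\Omega}=0$ — and one reads off that $E$ is Lipschitz with
\[E'(t) = \int_\Omega \m(x,t)\cdot\nabla x_2\d x = \int_\Omega m_2(x,t)\d x \qquad\text{for a.e. } t>0.\]

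Next I would convert $E'(t)$ into (minus) the kinetic energy, at the cost of an inequality. From $(\rho,\v,\m)(x,t)\in K^{\Lambda_{\operatorname{ns}}}$ the second coordinate of $\m$ obeys $m_2 \le \rho v_2$ pointwise a.e., so $E'(t) \le \int_\Omega \rho v_2\d x$. On the other hand, since $\Omega$ is simply connected and $\v+(0,\rho)\in L^2(\Omega;\R^2)$ is curl-free, there is a potential $q(\cdot,t)\in W^{1,2}(\Omega)$ with $\v+(0,\rho)=\nabla q$; then $\nabla\cdot\v=0$ and $\v\cdot\n|_{\partial\Omega}=0$ give, after an integration by parts,
\[\int_\Omega\abs{\v}^2 = \int_\Omega\v\cdot\bigl(\nabla q - (0,\rho)\bigr) = -\int_\Omega\rho v_2,\]
exactly as in ~\cite{Elgindi} (here $q=-p$ plays the role of the pressure). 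Hence $E'(t)\le -\int_\Omega\abs{\v(x,t)}^2\d x\le 0$ for a.e. $t$.

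Integrating in time, for every $T>0$ we obtain
\[\int_0^T\int_\Omega\abs{\v(x,t)}^2\d x\d t \le E(0)-E(T) \le 2\int_\Omega\abs{x_2}\d x,\]
which is finite because $\Omega$ is bounded, and independent of $T$. Letting $T\to\infty$ gives $\v\in L^2(0,\infty;L^2_\sigma)$ and, in particular, $\int_M^\infty\int_\Omega\abs{\v}^2\to 0$ as $M\to\infty$.

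The computations are routine; the only points requiring care are the time regularity of $E$, i.e.\ deriving $E'(t)=\int_\Omega m_2$ rigorously from the distributional mass equation with a test function that does not vanish on $\partial\Omega$, and checking that the identity $\int_\Omega\abs{\v}^2=-\int_\Omega\rho v_2$ follows from the linear constraints of a subsolution alone — it does, because $\nabla^\perp\cdot(\v+(0,\rho))=0$ on a simply connected domain is precisely Darcy's law in potential form. Of the whole hull $K^{\Lambda_{\operatorname{ns}}}$ only the scalar inequality $m_2\le\rho v_2$ enters, and no information on $K^{lc,\Lambda}$ is needed.
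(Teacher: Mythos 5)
Your proof is correct and follows essentially the same route as the paper: test the mass equation with $\eta(t)x_2$ to get a Lipschitz function $t\mapsto\int_\Omega\rho x_2$, use the hull condition to get $m_2\le\rho v_2$, convert $\int_\Omega\rho v_2$ into $-\int_\Omega\abs{\v}^2$ via the curl-free constraint, and integrate in time to get a $T$-uniform bound. The only cosmetic difference is that the paper justifies $\int_0^t\int_\Omega\rho v_2=-\int_0^t\int_\Omega\abs{\v}^2$ by approximating $\v$ in $L^2(0,t;L^2_\sigma)$ by fields $\nabla^\perp\varphi_j$ in space-time, whereas you pass to a pointwise-in-time potential $q(\cdot,t)$ (which requires disintegrating the space-time distributional condition \eqref{Subsolution of non-stationary IPM 3} to a.e.\ $t$, a standard step); both are fine.
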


Proposition \ref{Third main theorem} and its proof work equally well in the confined IPM case $\Omega = \T^1 \times (-1,1)$. Before presenting the proof, we recall the definition of a subsolution in this context. Under the integrability assumptions of Theorem \ref{Third main theorem}, $z = (\rho,\v,\m)$ is a subsolution of non-stationary IPM if
\begin{equation} \label{z in hull}
z(x) \in K^\Lambda = \left\{ (\bar{\rho},\bar{\v},\bar{\m}) \colon |\bar{\rho}| \le 1, \abs{\bar{\m} - \bar{\rho} \bar{\v} + \left(0,\frac{1-\bar{\rho}^2}{2} \right)} \le \frac{1-\bar{\rho}^2}{2} \right\}
\end{equation}
a.e. $x \in \Omega \times [0,\infty)$ and
\begin{align}
& \int_0^\infty \int_\Omega (\rho \, \partial_t \varphi + \m \cdot \nabla \varphi) \d x \d t + \int_\Omega \rho_0 \varphi(\cdot,0) \d x = 0 \quad \forall \varphi \in C_c^\infty(\bar{\Omega} \times [0,\infty)), \label{Subsolution of non-stationary IPM 1} \\
& \int_0^\infty \int_\Omega \v \cdot \nabla \varphi \d x \d t = 0 \quad \forall \varphi \in C_c^\infty(\bar{\Omega} \times [0,\infty)), \label{Subsolution of non-stationary IPM 2} \\
& \int_0^\infty \int_\Omega (\v + (0,\rho)) \cdot \nabla^\perp \varphi \d x \d t = 0 \quad \forall \varphi \in C_c^\infty(\Omega \times [0,\infty)). \label{Subsolution of non-stationary IPM 3}
\end{align}
Note that \eqref{Subsolution of non-stationary IPM 1}--\eqref{Subsolution of non-stationary IPM 2} incororate the condition $\v \cdot \nu|_{\partial \Omega} = \m \cdot \nu|_{\partial \Omega} = 0$.

\begin{proof}[Proof of Proposition \ref{Third main theorem}]
Let $\eta \in C_c^\infty(0,\infty)$ and set $\varphi(x,t) \defeq \eta(t) x_2$ in \eqref{Subsolution of non-stationary IPM 1}, so that
\[\int_0^\infty \eta'(t) \int_\Omega \rho(x,t) x_2 \d x \d t + \int_0^\infty \eta \int_\Omega m_2(x,t) \d x \d t = 0.\]
As a consequence, $\partial_t \int_\Omega \rho(x,\cdot) x_2 \d x = \int_\Omega m_2(x,\cdot) \d x \in L^\infty(0,\infty)$ in the sense of distributions. Thus, after possibly modifying $\rho$ on a set of measure zero, $F(t) \defeq \int_\Omega \rho(x,t) x_2 \d x$ is Lipschitz continuous and
\begin{equation} \label{Information on F 1}
F(t) = \int_\Omega \rho_0(x) x_2 \d x + \int_0^t \int_\Omega m_2(x,\tau) \d x \d \tau
\end{equation}
for all $t \in [0,\infty)$.

We use \eqref{z in hull} to get $m_2 = \rho v_2 + (1-\rho^2) (e_2-1)/2$, where $\e = (e_1,e_2)$ takes values in $\bar{B}(0,1)$, so that
\begin{equation} \label{Information on F 2}
\int_0^t \int_\Omega m_2(x,\tau) \d x \d \tau
\le \int_0^t \int_\Omega \rho(x,\tau) v_2(x,\tau) \d x \d \tau.
\end{equation}
Now, approximating $\v$ in $L^2(0,t;L^2_\sigma)$ by mappings $\nabla^\perp \varphi_j$, $\varphi_j \in C_c^\infty(\Omega \times [0,t))$, the assumption \eqref{Subsolution of non-stationary IPM 3} gives
\begin{equation} \label{Information on F 3}
\int_0^t \int_\Omega \rho(x,\tau) v_2(x,\tau) \d x \d \tau = - \int_0^t \int_\Omega \abs{\v(x,\tau)}^2 \d x \d \tau.
\end{equation}
Combining \eqref{Information on F 1}--\eqref{Information on F 3}, we conclude that
\begin{align*}
\int_0^t \int_\Omega |\v(x,\tau)|^2 \d x \d \tau - \int_\Omega \rho_0(x) x_2 \d x
&\le -F(t) \le \int_\Omega |\rho(x,t) x_2| \d x \\
&\le \norm{\rho}_{L^\infty(0,\infty;L^\infty)} \int_\Omega |x_2| \d x
\end{align*}
for all $t \in [0,\infty)$. The claim follows.
\end{proof}

\bigskip
\footnotesize
\noindent\textit{Acknowledgments.}
We express warm thanks to \'{A}ngel Castro, Daniel Faraco and Francisco Mengual for useful comments.

\bibliography{Stationaryipm}
\bibliographystyle{amsplain}
\end{document}